\title{Circular Law Theorem for Random Markov Matrices}
\date{Preprint -- May 2010 -- Compiled \today}
\subjclass[2000]{15A52} %
\keywords{Random matrices; Eigenvalues; Spectrum; Stochastic Matrices; Markov
  chains}
\author{Charles Bordenave}
\address[Ch.~Bordenave $\mathscr{C}_b$]{Universit\'e de Toulouse\\
  UMR 5219 CNRS Institut de Math\'ematiques de Toulouse\\
  118 route de Narbonne, F-31062 Toulouse, France}
\email{charles.bordenave(at)math.univ-toulouse.fr}
\urladdr{http://www.math.univ-toulouse.fr/~bordenave/}
\author{Pietro Caputo}
\address[P.~Caputo $\mathcal{P}(\mathds{C})$]{Dipartimento di Matematica\\
  Universit\`a Roma Tre\\
  Largo San Murialdo 1, 00146 Roma, Italy}
\email{caputo(at)mat.uniroma3.it}
\urladdr{http://www.mat.uniroma3.it/users/caputo/}
\author{Djalil Chafa\"\i}
\address[D.~Chafa{\"{\i}} $\mathcal{D}'(\mathds{C})$]{Universit\'e Paris-Est Marne-la-Vall\'ee\\
  UMR 8050 CNRS, 5 boulevard Descartes\\
  F-77454 Champs-sur-Marne, France}
\email{djalil(at)chafai.net} 
\urladdr{http://djalil.chafai.net/}
\newcommand{\ABS}[1]{{{\left| #1 \right|}}} 
\newcommand{\BRA}[1]{{{\left\{#1\right\}}}} 
\newcommand{\NRM}[1]{{{\left\| #1\right\|}}} 
\newcommand{\PAR}[1]{{{\left(#1\right)}}} 
\newcommand{\TV}[1]{\NRM{#1}_\textsc{TV}} 
\newcommand{\CEIL}[1]{{{\lceil#1\rceil}}} %
\newcommand{\FLOOR}[1]{{{\lfloor#1\rfloor}}} %
\renewcommand{\leq}{\leqslant}
\renewcommand{\geq}{\geqslant}
\newtheorem{thm}{Theorem}[section]%
\newtheorem{lem}[thm]{Lemma}%
\newtheorem{rem}[thm]{Remark}%
\numberwithin{equation}{section}
\begin{document}

\begin{abstract}
  Let $(X_{jk})_{jk\geq1}$ be i.i.d.\ nonnegative random variables with
  bounded density, mean $m$, and finite positive variance $\sigma^2$. Let $M$
  be the $n\times n$ random Markov matrix with i.i.d.\ rows defined by
  $M_{jk}=X_{jk}/(X_{j1}+\cdots+X_{jn})$. In particular, when $X_{11}$ follows
  an exponential law, the random matrix $M$ belongs to the Dirichlet Markov
  Ensemble of random stochastic matrices. Let $\lambda_1,\ldots,\lambda_n$ be
  the eigenvalues of $\sqrt{n}M$ i.e.\ the roots in $\mathds{C}$ of its
  characteristic polynomial. Our main result states that with probability one,
  the counting probability measure
  $\frac{1}{n}\delta_{\lambda_1}+\cdots+\frac{1}{n}\delta_{\lambda_n}$
  converges weakly as $n\to\infty$ to the uniform law on the disk
  $\{z\in\mathds{C}:|z|\leq m^{-1}\sigma\}$. The bounded density assumption is
  purely technical and comes from the way we control the operator norm of the
  resolvent.
\end{abstract}

\maketitle

{\small\tableofcontents}

\section{Introduction}

The \emph{eigenvalues} of an $n\times n$ complex matrix $A$ are the roots in
$\mathds{C}$ of its characteristic polynomial. We label them
$\lambda_1(A),\ldots,\lambda_n(A)$ so that
$|\lambda_1(A)|\geq\cdots\geq|\lambda_n(A)|$ with growing phases. The
\emph{spectral radius} is $|\lambda_1(A)|$. We also denote by
$s_1(A)\geq\cdots\geq s_n(A)$ the \emph{singular values} of $A$, defined for
all $1\leq k\leq n$ by $s_k(A):=\lambda_k(\sqrt{AA^*})$ where
$A^*=\bar{A}^\top$ is the conjugate-transpose. The matrix $A$ maps the unit
sphere to an ellipsoid, the half-lengths of its principal axes being the
singular values of $A$. The \emph{operator norm} of $A$ is
$$
\Vert A\Vert_{2\to2}:=\max_{\Vert x\Vert_2=1}\Vert Ax\Vert_2=s_1(A)
\quad\text{while}\quad
s_n(A)=\min_{\NRM{x}_2=1}\NRM{Ax}_2.
$$
The matrix $A$ is singular iff $s_n(A)=0$, and if not then
$s_n(A)=s_1(A^{-1})^{-1}=\Vert A^{-1}\Vert_{2\to 2}^{-1}$. If $A$ is normal
(i.e.\ $A^*A=A^*A$) then $s_i(A)=|\lambda_i(A)|$ for every $1\leq i\leq n$.
Beyond normal matrices, the relationships between the eigenvalues and the
singular values are captured by the Weyl inequalities (see lemma
\ref{le:weyl}). Let us define the discrete probability measures
$$
\mu_A:=\frac{1}{n}\sum_{k=1}^n\delta_{\lambda_k(A)}
\quad\text{and}\quad
\nu_A:=\frac{1}{n}\sum_{k=1}^n\delta_{s_k(A)}.
$$
From now on, we denote ``$\overset{\mathscr{C}_b}{\longrightarrow}$'' the weak
convergence of probability measures with respect to bounded continuous
functions.
We use the abbreviations \emph{a.s.}, \emph{a.a.}, and \emph{a.e.} for
\emph{almost surely}, \emph{Lebesgue almost all}, and \emph{Lebesgue almost
  everywhere} respectively. The notation $n\gg1$ means \emph{large enough
  $n$}. Let $(X_{i,j})_{i,j\geq1}$ be an infinite table of i.i.d.\ complex
random variables with finite positive variance $0<\sigma^2<\infty$. If one
defines the square $n\times n$ complex random matrix $X:=(X_{i,j})_{1\leq
  i,j\leq n}$ then the quartercircular law theorem (universal square version
of the Marchenko-Pastur theorem, see
\cite{marchenko-pastur,MR0467894,MR862241}) states that a.s.
\begin{equation}\label{eq:QCLT}
  \nu_{\frac{1}{\sqrt{n}}X}%
  \underset{n\to\infty}{\overset{\mathscr{C}_b}{\longrightarrow}}%
  \mathcal{Q}_\sigma
\end{equation} 
where $\mathcal{Q}_\sigma$ is the quartercircular law on the real interval
$[0,2\sigma]$ with Lebesgue density
$$
x\mapsto \frac{1}{\pi\sigma^2}\sqrt{4\sigma^2-x^2}\mathds{1}_{[0,2\sigma]}(x).
$$
Additionally, it is shown in \cite{MR1235416,MR963829,bai-silverstein-book}
that 
\begin{equation}\label{eq:s1X}
\lim_{n\to\infty} s_1({\scriptstyle\frac{1}{\sqrt{n}}}X)=2\sigma
\text{\ a.s.\ } %
\quad\text{iff}\quad %
\mathds{E}(X_{1,1})=0\quad\text{and}\quad\mathds{E}(|X_{1,1}|^4)<\infty.
\end{equation}
Concerning the eigenvalues, the famous Girko circular law theorem states that
a.s.\
\begin{equation}\label{eq:CLT}
\mu_{\frac{1}{\sqrt{n}}X}%
\underset{n\to\infty}{\overset{\mathscr{C}_b}{\longrightarrow}}%
\mathcal{U}_\sigma
\end{equation} 
where $\mathcal{U}_\sigma$ is the uniform law on the disc
$\{z\in\mathds{C}:|z|\leq\sigma\}$, known as the circular law. This statement
was established through a long sequence of partial results
\cite{MR0220494,MR773436,MR2130247,MR841088,MR1437734,MR1454343,MR1428519,bai-silverstein-book,1687963,gotze-tikhomirov-new,MR2409368,tao-vu-cirlaw-bis},
the general case \eqref{eq:CLT} being finally obtained by Tao and Vu
\cite{tao-vu-cirlaw-bis}. From \eqref{eq:CLT} we have a.s.\
$\varliminf_{n\to\infty}|\lambda_k(n^{-1/2}X)|\geq\sigma$ for any fixed
$k\geq1$, and we get from \cite{MR863545,1687963} and \eqref{eq:s1X} that if
additionally $\mathds{E}(X_{1,1})=0$ and $\mathds{E}(|X_{1,1}|^4)<\infty$ then
a.s.\
\begin{equation}\label{eq:silara}
  \lim_{n\to\infty}|\lambda_1({\scriptstyle\frac{1}{\sqrt{n}}}X)|=\sigma%
  \quad\text{and}\quad %
  \lim_{n\to\infty}\frac{s_1(X)}{|\lambda_1(X)|}=2.
\end{equation}
The behavior of the ratio operator-norm/spectral-radius suggests that $X$ is
far from being an asymptotically normal matrix. Following
\cite{MR1284550,djalil-nccl}, if $\mathds{E}(X_{1,1})\neq0$ while
$\mathds{E}(|X_{1,1}|^4)<\infty$ then a.s.\ $|\lambda_1(n^{-1/2}X)|\to+\infty$
at speed $\sqrt{n}$ while $|\lambda_2(n^{-1/2}X)|$ remains bounded.

The proof of \eqref{eq:CLT} is partly but crucially based on a polynomial
lower bound on the smallest singular value proved in \cite{MR2409368}: for
every $a,d>0$, there exists $b>0$ such that for any deterministic complex
$n\times n$ matrix $A$ with $s_1(A)\leq n^d$ we have
\begin{equation}\label{eq:sn-exp-pol}
  \mathds{P}(s_n(X+A)\leq n^{-b})\leq n^{-a}.
\end{equation}
In particular, by the first Borel-Cantelli lemma, there exists $b>0$ which may
depend on $d$ such that a.s.\ $X+A$ is invertible with $s_n(X+A)\geq n^{-b}$
for $n\gg1$.

\subsection{Random Markov matrices and main results}

From now on and unless otherwise stated $(X_{i,j})_{i,j\geq1}$ is an infinite
array of nonnegative real random variables with mean
$m:=\mathds{E}(X_{1,1})>0$ and finite positive variance
$\sigma^2:=\mathds{E}(X_{1,1}^2)-m^2$. Let us define the event
$$
\mathcal{D}_n:=\{\rho_{n,1}\cdots\rho_{n,n}>0\}
\quad\text{where}\quad
\rho_{n,i}:=X_{i,1}+\cdots+X_{i,n}.
$$
Since $\sigma>0$ we get $q:=\mathds{P}(X_{1,1}=0)<1$ and thus
$$
\sum_{n=1}^\infty\mathds{P}(\mathcal{D}_n^c) %
=\sum_{n=1}^{\infty} (1-(1-q^n)^n) %
\leq\sum_{n=1}^\infty nq^n<\infty.
$$
By the first Borel-Cantelli lemma, a.s.\ for $n\gg1$, one can define the
$n\times n$ matrix $M$ by
$$
M_{i,j}:=\frac{X_{i,j}}{\rho_{n,i}}.
$$
The matrix $M$ is Markov since its entries belong to $[0,1]$ and each row sums
up to $1$. We have $M=DX$ where $X:=(X_{i,j})_{1\leq i,j\leq n}$ and $D$ is
the $n\times n$ diagonal matrix defined by
$$
D_{i,i}:=\frac{1}{\rho_{n,i}}.
$$
We may define $M$ and $D$ for all $n\geq1$ by setting, when $\rho_{n,i}=0$,
$M_{i,j}=\delta_{i,j}$ for all $1\leq j\leq n$ and $D_{i,i}=1$. The matrix $M$
has equally distributed dependent entries. However, the rows of $M$ are
i.i.d.\ and follow an exchangeable law on $\mathds{R}^n$ supported by the
simplex
$$
\Lambda_n:=\{(p_1,\ldots,p_n)\in[0,1]^n:p_1+\cdots+p_n=1\}.
$$
From now on, we set $m=1$. This is actually no loss of generality since the
law of the random matrix $M$ is invariant under the linear scaling $t\to
t\,X_{i,j}$ for any $t>0$. Since $\sigma<\infty$, the uniform law of large
numbers of Bai and Yin \cite[lem. 2]{MR1235416} states that a.s.\
\begin{equation}\label{eq:ULLN}
\max_{1\leq i\leq n}\ABS{\rho_{n,i}-n}=o(n).
\end{equation}
This suggests that $\sqrt{n}M$ is approximately equal to $n^{-1/2}X$ for
$n\gg1$. One can then expect that \eqref{eq:QCLT} and \eqref{eq:CLT} hold for
$\sqrt{n}M$. Our work shows that this heuristics is valid. There is however a
complexity gap between \eqref{eq:QCLT} and \eqref{eq:CLT}, due to the fact
that for nonnormal operators such as $M$, the eigenvalues are less stable than
the singular values under perturbations, see e.g.\ the book \cite{MR2155029}.
Our first result below constitutes the analog of the universal
Marchenko-Pastur theorem \eqref{eq:QCLT} for $\sqrt{n}M$, and generalizes the
result of the same kind obtained in \cite{djalil-dme} in the case where
$X_{1,1}$ follows an exponential law.

\begin{thm}[Quartercircular law theorem]\label{th:singular}
  We have a.s.\
  $$
  \nu_{\sqrt{n}M} %
  \underset{n\to\infty}{\overset{\mathscr{C}_b}{\longrightarrow}}%
  \mathcal{Q}_\sigma.
  $$
\end{thm}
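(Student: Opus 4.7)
My approach is to transfer the universal quartercircular law \eqref{eq:QCLT} from $\tfrac{1}{\sqrt{n}}X$ to $\sqrt{n}M$ by a direct perturbation argument based on the Hoffman--Wielandt inequality for singular values. The uniform row-sum control \eqref{eq:ULLN} is precisely what is needed to make the perturbation negligible in Hilbert--Schmidt norm, while the Hoffman--Wielandt bound transforms this into closeness of the empirical singular value distributions.

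I would first write $\sqrt{n}M - \tfrac{1}{\sqrt{n}}X = \tfrac{1}{\sqrt{n}}(nD-I)X$, so that the squared Hilbert--Schmidt norm of the difference factorizes as
$$
\NRM{\sqrt{n}M - \tfrac{1}{\sqrt{n}}X}_{\mathrm{HS}}^2 = \frac{1}{n}\sum_{i=1}^n\PAR{\frac{n}{\rho_{n,i}}-1}^2\sum_{j=1}^n X_{i,j}^2 \leq \max_{1\leq i\leq n}\PAR{\frac{n}{\rho_{n,i}}-1}^2\cdot\frac{1}{n}\sum_{i,j=1}^n X_{i,j}^2.
$$
Then \eqref{eq:ULLN} yields $\max_{1\leq i\leq n}\ABS{n/\rho_{n,i}-1}\to 0$ a.s., while the strong law of large numbers applied to the i.i.d.\ family $(X_{i,j}^2)$ (enumerated along the subsequence $N=n^2$) gives $\frac{1}{n^2}\sum_{i,j\leq n} X_{i,j}^2\to 1+\sigma^2$ a.s.\ since $\mathds{E}(X_{1,1}^2)=1+\sigma^2<\infty$. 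Combining the two, $\frac{1}{n}\NRM{\sqrt{n}M-\tfrac{1}{\sqrt{n}}X}_{\mathrm{HS}}^2\to 0$ a.s.

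To conclude I would invoke the Hoffman--Wielandt inequality for singular values, $\sum_{k=1}^n(s_k(A)-s_k(B))^2\leq\NRM{A-B}_{\mathrm{HS}}^2$, which follows from its Hermitian counterpart applied to the self-adjoint dilation $\bigl(\begin{smallmatrix}0 & A \\ A^* & 0\end{smallmatrix}\bigr)$. The left-hand side, divided by $n$, is precisely the squared $L^2$-Wasserstein distance on $\mathds{R}_+$ between $\nu_A$ and $\nu_B$. Applying this with $A=\sqrt{n}M$ and $B=\tfrac{1}{\sqrt{n}}X$ and using the previous paragraph shows that a.s.\ $\nu_{\sqrt{n}M}$ and $\nu_{\frac{1}{\sqrt{n}}X}$ have the same weak limit, which by \eqref{eq:QCLT} is $\mathcal{Q}_\sigma$. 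No individual step here is genuinely difficult: the argument goes through cleanly because singular values, unlike eigenvalues, are Lipschitz-stable in Hilbert--Schmidt norm, so the rough bound \eqref{eq:ULLN} is more than sufficient. The real obstacles of the paper will appear later in the eigenvalue (circular law) statement, where no such perturbation stability is available and one must instead control the smallest singular value of $\sqrt{n}M-zI$ uniformly in $z$.
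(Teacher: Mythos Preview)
Your argument is correct and is genuinely different from the paper's proof. The paper exploits the \emph{multiplicative} structure $\sqrt{n}M=(nD)(n^{-1/2}X)$ together with the sandwich bound \eqref{eq:basic3} to get $s_n(nD)s_i(n^{-1/2}X)\leq s_i(\sqrt{n}M)\leq s_1(nD)s_i(n^{-1/2}X)$, and since $s_1(nD),s_n(nD)\to1$ by \eqref{eq:ULLN} it deduces a \emph{uniform} bound $\max_i|\log s_i(\sqrt{n}M)-\log s_i(n^{-1/2}X)|=o(1)$; the conclusion then follows from lemma \ref{le:perturb} after the change of variable $t\mapsto\log t$. This route forces one to handle the possibility $s_i=0$, which the paper does via \eqref{eq:isinv} (or via the alternatives in the remark closing section \ref{se:singular}).

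Your \emph{additive} route via Hoffman--Wielandt is more direct: it trades the uniform (sup) control for an averaged ($\ell^2$) one, which is exactly what weak convergence needs, and it bypasses entirely the invertibility issue and the logarithmic detour. It also works under the bare second-moment hypothesis, whereas the alternative mentioned at the end of section \ref{se:singular} (via \eqref{eq:deltasi}) requires a finite fourth moment. The paper's multiplicative bound, on the other hand, yields the stronger pointwise comparison $s_i(\sqrt{n}M)=(1+o(1))s_i(n^{-1/2}X)$ uniformly in $i$, which is more information than weak convergence alone and is reused later (e.g.\ in the proof of theorem \ref{th:localization}).
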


Our second result provides some estimates on the largest singular values and
eigenvalues.

\begin{thm}[Extremes]\label{th:localization}
  We have $\lambda_1(M)=1$. Moreover, if
  $\mathds{E}(|X_{1,1}|^4)<\infty$ then a.s.\
  $$
  \lim_{n\to\infty}s_1(M)= 1
  \quad\text{and}\quad
  \lim_{n\to\infty}
  s_2(\sqrt{n}M)= 2\sigma
  \quad\text{while}\quad %
  \varlimsup_{n\to\infty}|\lambda_2(\sqrt{n}M)|\leq 2\sigma.
  $$
\end{thm}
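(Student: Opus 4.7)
The first assertion is immediate: the vector $\mathbf{1}=(1,\ldots,1)^\top$ is an eigenvector of $M$ with eigenvalue $1$ because $M$ is row-stochastic, while $\|M\|_{\infty\to\infty}=1$ forces $|\lambda_i(M)|\leq 1$ for every $i$, so with our labeling $\lambda_1(M)=1$. The same eigenvector gives the lower bound $s_1(M)\geq |\lambda_1(M)|=1$. For the matching upper bound, factor $M=DX$ with $D_{i,i}=1/\rho_{n,i}$ and decompose $X=J+Y$, where $J:=\mathbf{1}\mathbf{1}^\top$ and $Y:=X-J$ has i.i.d.\ centered entries of variance $\sigma^2$ and finite fourth moment. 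Then $s_1(J)=n$, $s_1(Y)=2\sigma\sqrt{n}(1+o(1))$ a.s.\ by \eqref{eq:s1X} applied to $Y$, and $\|D\|_{2\to 2}=(\min_i\rho_{n,i})^{-1}=(n+o(n))^{-1}$ a.s.\ by \eqref{eq:ULLN}; combining through the standard norm inequalities yields $s_1(M)\leq\|D\|_{2\to 2}\bigl(s_1(J)+s_1(Y)\bigr)\to 1$ a.s.

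The technical heart is the claim $s_2(\sqrt{n}M)\to 2\sigma$. Keep the same decomposition $\sqrt{n}M=\sqrt{n}DJ+\sqrt{n}DY$: the first summand is rank one and carries the large singular value coming from the nonzero mean of the $X_{ij}$, while the second is, up to a small perturbation, the properly rescaled centered matrix $Y/\sqrt{n}$. Indeed, from \eqref{eq:ULLN} one gets $\|\sqrt{n}D-n^{-1/2}I\|_{2\to 2}=o(n^{-1/2})$ a.s., and combined with $s_1(Y)=O(\sqrt{n})$ this yields $\|\sqrt{n}DY-Y/\sqrt{n}\|_{2\to 2}=o(1)$ a.s., so by Weyl's perturbation inequality $s_k(\sqrt{n}DY)=s_k(Y/\sqrt{n})+o(1)$ for every fixed $k$. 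Now \eqref{eq:QCLT} applied to $Y$ gives $\nu_{Y/\sqrt{n}}\to\mathcal{Q}_\sigma$ weakly a.s.; together with $s_1(Y/\sqrt{n})\to 2\sigma$ from \eqref{eq:s1X} and the fact that $\mathcal{Q}_\sigma$ charges every left neighborhood of $2\sigma$, a short portmanteau-plus-squeeze argument gives $s_k(Y/\sqrt{n})\to 2\sigma$ a.s.\ for every fixed $k\geq 1$. It remains to transfer this to $\sqrt{n}M$ through the rank-one perturbation: applying $s_{i+j-1}(A+B)\leq s_i(A)+s_j(B)$ with $A=\sqrt{n}DY$, $B=\sqrt{n}DJ$ and $(i,j)=(1,2)$ gives $s_2(\sqrt{n}M)\leq s_1(\sqrt{n}DY)\to 2\sigma$, while the same inequality applied to $\sqrt{n}DY=\sqrt{n}M-\sqrt{n}DJ$ with $(i,j)=(2,2)$ produces the matching lower bound $s_3(\sqrt{n}DY)\leq s_2(\sqrt{n}M)$.

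For the eigenvalue statement, invoke the multiplicative Weyl inequality (lemma \ref{le:weyl}): $|\lambda_1(A)\lambda_2(A)|\leq s_1(A)s_2(A)$. Applied to $A=\sqrt{n}M$ with $|\lambda_1(\sqrt{n}M)|=\sqrt{n}$ from the first assertion, it gives $|\lambda_2(\sqrt{n}M)|\leq s_1(M)\,s_2(\sqrt{n}M)\to 1\cdot 2\sigma=2\sigma$ by the two previous parts. The main obstacle is really the singular-value claim: one must peel off the correct rank-one contribution produced by the nonzero mean of $X$ and show that the random diagonal renormalization $D$ shifts the top few singular values of $Y/\sqrt{n}$ by only $o(1)$; once this is done the eigenvalue bound drops out from Weyl essentially for free.
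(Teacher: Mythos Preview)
Your proof is correct and follows the same overall strategy as the paper's: decompose $X=J+Y$ (the paper writes $S$ for your $J$), use \eqref{eq:s1X} on the centered matrix $Y$, peel off a rank-one piece, and invoke the quartercircular law for the lower bound on $s_2$. The tactical execution differs in two places. First, to pass from $Y$ to $\sqrt{n}M$ the paper uses the \emph{multiplicative} diagonal sandwich \eqref{eq:basic3}, i.e.\ $s_n(nD)\,s_i(n^{-1/2}X)\le s_i(\sqrt{n}M)\le s_1(nD)\,s_i(n^{-1/2}X)$, together with Thompson interlacing (lemma~\ref{le:thompson}) applied to $n^{-1/2}X$ versus $n^{-1/2}(X-S)$; you instead use the \emph{additive} estimate $\|\sqrt{n}DY-n^{-1/2}Y\|_{2\to2}=o(1)$ and the Weyl bound $s_{i+j-1}(A+B)\le s_i(A)+s_j(B)$ in both directions. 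Both are perfectly valid; the paper's route yields the slightly stronger uniform statement \eqref{eq:deltasi} over all $2\le i\le n$, while yours keeps the argument at the level of finitely many top singular values. Second, for the lower bound $\liminf s_2(\sqrt{n}M)\ge 2\sigma$ the paper quotes theorem~\ref{th:singular} (the quartercircular law for $\sqrt{n}M$ itself), whereas you use \eqref{eq:QCLT} for $Y/\sqrt{n}$ directly and transfer it through the rank-one correction; this makes your proof of theorem~\ref{th:localization} logically independent of theorem~\ref{th:singular}, which is a mild advantage. The $\lambda_1(M)=1$, $s_1(M)\to1$, and Weyl-based $|\lambda_2|$ arguments are identical in both.
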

  
Our third result below is the analogue of \eqref{eq:CLT} for our random Markov
matrices. When $X_{1,1}$ follows the exponential distribution of unit mean,
theorem \ref{th:circular} is exactly the circular law theorem for the
Dirichlet Markov Ensemble conjectured in \cite{djalil-dme,MR2549497}. Note
that we provide, probably for the first time, an almost sure circular law
theorem for a matrix model with dependent entries under a finite positive
variance assumption.

\begin{thm}[Circular law theorem]\label{th:circular}
  If $X_{1,1}$ has a bounded density then a.s.\ 
  $$
  \mu_{\sqrt{n}M}
  \underset{n\to\infty}{\overset{\mathscr{C}_b}{\longrightarrow}}%
  \mathcal{U}_\sigma.
  $$
\end{thm}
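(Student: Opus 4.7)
The proof will follow Girko's \emph{Hermitization} strategy via logarithmic potentials, in the form developed by Tao and Vu \cite{tao-vu-cirlaw-bis}. For every $z\in\mathds{C}$ one has
$$
U_{\mu_{\sqrt{n}M}}(z):=-\int_\mathds{C}\log\ABS{z-w}\,d\mu_{\sqrt{n}M}(w)=-\int_0^{\infty}\log(x)\,d\nu_{\sqrt{n}M-zI}(x),
$$
so Theorem \ref{th:circular} will follow, via an inverse log-potential argument, from proving that for Lebesgue-a.e.\ $z\in\mathds{C}$, a.s.\ the sequence $\nu_{\sqrt{n}M-zI}$ converges weakly to a deterministic measure $\nu_z$, that $\log(\cdot)$ is uniformly integrable against it, and that $\nu_z$ matches the limiting measure producing $\mathcal{U}_\sigma$ in the classical circular law \eqref{eq:CLT}.

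\emph{Identification of $\nu_z$.} Writing $M=DX$ with $D_{i,i}=1/\rho_{n,i}$ yields
$$
\sqrt{n}M-zI=\PAR{\frac{1}{\sqrt{n}}X-zI}+\PAR{\sqrt{n}D-\frac{1}{\sqrt{n}}I}X.
$$
By \eqref{eq:ULLN}, a.s.\ $\NRM{\sqrt{n}D-n^{-1/2}I}_{2\to 2}=o(n^{-1/2})$, while $\NRM{X}_{2\to 2}=O(\sqrt{n})$ a.s.\ by a standard Bai-Yin bound, so the second term has operator norm $o(1)$. Weyl's inequality then gives $\max_k\ABS{s_k(\sqrt{n}M-zI)-s_k(n^{-1/2}X-zI)}=o(1)$, and $\nu_{\sqrt{n}M-zI}$ inherits the a.s.\ weak limit $\nu_z$ of $\nu_{n^{-1/2}X-zI}$ established in the proof of \eqref{eq:CLT}.

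\emph{Uniform integrability.} At infinity, a uniform bound on the second moment suffices:
$$
\int x^2\,d\nu_{\sqrt{n}M-zI}(x)=\frac{1}{n}\mathrm{tr}\bigl((\sqrt{n}M-zI)(\sqrt{n}M-zI)^*\bigr)
$$
converges a.s.\ to $1+\sigma^2+\ABS{z}^2$ by the strong law of large numbers applied to $\sum_{i,j}X_{i,j}^2/\rho_{n,i}^2$ together with \eqref{eq:ULLN}. At zero, the decisive input is a polynomial lower bound
$$
s_n(\sqrt{n}M-zI)\geq n^{-b}\quad\text{a.s.\ for $n\gg1$}
$$
and some $b=b(z)>0$; this is the analog of \eqref{eq:sn-exp-pol} for the present dependent matrix and constitutes the heart of the argument.

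\emph{Main obstacle.} The polynomial lower bound on $s_n(\sqrt{n}M-zI)$ is by far the hardest step and is the only place where the \emph{bounded density} hypothesis on $X_{1,1}$ is used. The plan is to factor
$$
\sqrt{n}M-zI=\sqrt{n}D\PAR{X-z(\sqrt{n}D)^{-1}},
$$
observe that $\sqrt{n}D$ is a.s.\ well-conditioned by \eqref{eq:ULLN}, and reduce to a polynomial lower bound on $s_n(X-\Delta_n)$ for the random diagonal matrix $\Delta_n=z(\sqrt{n}D)^{-1}$, which is concentrated around $z\sqrt{n}I$. One then follows the geometric approach of Rudelson-Vershynin: bound from below the Euclidean distance from each row of $X-\Delta_n$ to the hyperplane spanned by the other $n-1$ rows via a small-ball/anti-concentration estimate, for which the bounded density of $X_{1,1}$ supplies the decisive ingredient. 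The delicate point, absent from the independent-entries setting of \eqref{eq:sn-exp-pol}, is that $\Delta_n$ depends on \emph{every} row of $X$; the argument must therefore condition on the near-deterministic values of the $\rho_{n,i}$ in order to freeze the diagonal perturbation before invoking the anti-concentration inequality.
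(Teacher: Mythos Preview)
Your overall plan via Girko Hermitization is correct and matches the paper, but two steps are genuinely flawed.

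\textbf{Identification of $\nu_z$.} Your claim $\NRM{X}_{2\to2}=O(\sqrt{n})$ is false: the entries of $X$ are nonnegative with mean $1$, so for $u=n^{-1/2}(1,\ldots,1)^\top$ one has $\NRM{Xu}_2=n^{-1/2}\bigl(\sum_i\rho_{n,i}^2\bigr)^{1/2}\sim n$, hence $s_1(X)\sim n$, not $\sqrt{n}$. The Bai--Yin bound requires centered entries (and a fourth moment, which is not assumed here). Consequently the additive perturbation $(\sqrt{n}D-n^{-1/2}I)X$ has operator norm $o(n^{-1/2})\cdot\Theta(n)=o(\sqrt{n})$, not $o(1)$, and Weyl's inequality gives nothing. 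The paper avoids this by using the \emph{multiplicative} structure: it writes $\sqrt{n}M-zI=(nD)W$ with $W=n^{-1/2}X-zn^{-1}D^{-1}$, observes that $W-(n^{-1/2}X-zI)$ is diagonal with entries $z(1-\rho_{n,i}/n)=o(1)$, deduces $\nu_W\to\nu_z$, and then transfers to $\nu_{(nD)W}$ via the two-sided bound $s_n(nD)s_i(W)\leq s_i((nD)W)\leq s_1(nD)s_i(W)$ together with $s_1(nD),s_n(nD)\to1$.

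\textbf{Uniform integrability near zero.} The polynomial lower bound $s_n(\sqrt{n}M-zI)\geq n^{-b}$ is \emph{not} by itself sufficient: it only says each $\log(s_i^{-1})\leq b\log n$, and if a positive fraction of the $s_i$ were near $n^{-b}$ the integral $\int_0^\delta\log(t^{-1})\,d\nu_n$ would blow up. One needs a second ingredient controlling the \emph{intermediate} small singular values, namely $s_{n-i}\geq c\,i/n$ for $2n^{0.99}\leq i\leq n-1$. The paper obtains this via the Tao--Vu negative second moment identity and a distance concentration lemma, applied to the rows of $nDX-\sqrt{n}zI$ after deleting $\lceil i/2\rceil$ rows and using Cauchy interlacing. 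The sum is then split: the $O(n^{0.99})$ smallest singular values contribute at most $n^{-1}\cdot n^{0.99}\cdot b\log n\to0$ thanks to the $s_n\geq n^{-b}$ bound, while the remaining ones are handled by $s_{n-i}\geq c\,i/n$. Your sketch omits this entire intermediate-range argument, which is an essential and nontrivial part of the proof.
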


The proof of theorem \ref{th:circular} is crucially based on the following
estimate on the norm of the resolvent of $\sqrt{n}M$.
It is the analogue of \eqref{eq:sn-exp-pol} for our random Markov matrices.

\begin{thm}[Smallest singular value]\label{th:least-singular} 
  If $X_{1,1}$ has a bounded density then for every $a,C>0$ there exists $b>0$
  such that for any $z\in\mathds{C}$ with $\ABS{z}\leq C$, for $n\gg1$,
  $$
  \mathds{P}(s_n(\sqrt{n}M-zI)\leq n^{-b})\leq n^{-a}.
  $$
  In particular, for some $b>0$ which may depend on $C$, a.s.\ for $n\gg1$,
  the matrix $\sqrt{n}M-zI$ is invertible with $s_n(\sqrt{n}M-zI) \geq
  n^{-b}$.
\end{thm}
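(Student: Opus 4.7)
The plan is to reduce the problem to a one-dimensional anti-concentration estimate for a linear form of the i.i.d.\ entries of a single row of $X$, using the bounded density hypothesis. The starting point is the factorization
$$
D^{-1}(\sqrt{n}M-zI)\;=\;\sqrt{n}X-zD^{-1},
\qquad D^{-1}=\mathrm{diag}(\rho_{n,1},\ldots,\rho_{n,n}),
$$
combined with the elementary inequality $s_n(D^{-1}A)\leq s_1(D^{-1})s_n(A)$, which yields
$$
s_n(\sqrt{n}M-zI)\;\geq\;\frac{s_n(\sqrt{n}X-zD^{-1})}{\max_i\rho_{n,i}}.
$$
Since $\mathrm{Var}(\rho_{n,i})=n\sigma^2$, Chebyshev and a union bound give $\mathds{P}(\max_i\rho_{n,i}>n^{1+\alpha})\leq \sigma^2 n^{-2\alpha}$ for any $\alpha>0$, so it is enough to lower-bound $s_n(\sqrt{n}X-zD^{-1})$ on an event of probability at least $1-n^{-a}$.

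For that, I would apply the standard deterministic row-distance bound
$$
s_n(\sqrt{n}X-zD^{-1})\;\geq\;\frac{1}{\sqrt{n}}\min_{1\leq i\leq n}\mathrm{dist}(R'_i,H'_i),
$$
where $R'_i$ is the $i$-th row and $H'_i$ the span of the others. Crucially, the rows of $\sqrt{n}X-zD^{-1}$ are independent, since row $i$ depends only on the $i$-th row of $X$; hence, conditionally on $\{X_{j,k}:j\neq i\}$, one fixes a unit normal $u\in\mathds{C}^n$ to $H'_i$ and a direct computation gives
$$
\langle R'_i,u\rangle \;=\; \sqrt{n}\sum_j \bar u_j X_{i,j}-z\bar u_i\sum_j X_{i,j} \;=\; \sum_j w_j X_{i,j},\qquad w_j:=\sqrt{n}\bar u_j-z\bar u_i.
$$
The cancellation of $\rho_{n,i}$ is essential: the row distance becomes a genuine linear form in the i.i.d.\ variables $X_{i,1},\ldots,X_{i,n}$.

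The main step, and the one I expect to be the technical heart of the argument, is the uniform coefficient estimate $\max_j|w_j|\geq 1/2$ for $n$ large, valid for all unit $u\in\mathds{C}^n$ and all $|z|\leq C$. If $|u_i|\geq n^{-1/2}$ then $|w_i|=|u_i||\sqrt{n}-z|\geq 1-C/\sqrt{n}\geq 1/2$; otherwise pigeonhole yields $k\neq i$ with $|u_k|\geq n^{-1/2}$, so that $|w_k|\geq \sqrt{n}|u_k|-|z||u_i|\geq 1-C/\sqrt{n}\geq 1/2$. It is precisely the large prefactor $\sqrt{n}$ on $\bar u_j$, produced by the factorization $D^{-1}(\sqrt{n}M-zI)=\sqrt{n}X-zD^{-1}$, that makes the bound uniform and lets us bypass the compressible/incompressible dichotomy and least-common-denominator analysis underlying \eqref{eq:sn-exp-pol}. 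Granted this, the bounded density of $X_{1,1}$ delivers anti-concentration at once: rotating by a phase so that some $w_{j_0}$ is real positive of modulus $\geq 1/2$, the real part of $\sum_j w_j X_{i,j}$ is a linear combination of i.i.d.\ real variables with coefficient $\geq 1/2$ at index $j_0$, whose density is therefore bounded by $2K$ via a one-step convolution estimate, yielding
$$
\mathds{P}\bigl(|\langle R'_i,u\rangle|\leq\epsilon\bigm|\{X_{j,k}\}_{j\neq i}\bigr)\;\leq\;4K\epsilon.
$$

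A union bound over $i$ then gives $\mathds{P}(\min_i\mathrm{dist}(R'_i,H'_i)\leq\epsilon)\leq 4Kn\epsilon$. Assembling the three estimates with $\epsilon=n^{-(a+1)}$ and $\alpha=a/2$ produces some $b=b(a,C)$ such that $\mathds{P}(s_n(\sqrt{n}M-zI)\leq n^{-b})\leq C'n^{-a}$ for $n\gg1$, and the a.s.\ statement follows from the first Borel--Cantelli lemma applied with, say, $a=2$. The bounded density is used at exactly one step, the anti-concentration, which is consistent with the authors' remark that this hypothesis is ``purely technical''.
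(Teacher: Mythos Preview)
Your proof is correct and follows the same overall strategy as the paper: factor out $D$, control the row sums $\rho_{n,i}$ by Chebyshev and a union bound, apply the Rudelson--Vershynin row-distance lower bound to the matrix with independent rows $Y=X-n^{-1/2}zD^{-1}$ (your $\sqrt{n}X-zD^{-1}$ is exactly $\sqrt{n}Y$), project a row onto a unit normal to obtain a linear form $\sum_j w_jX_{i,j}$ in the i.i.d.\ entries, show that some coefficient $w_j$ is bounded away from zero, and finish with a one-dimensional convolution bound using the bounded density of $X_{1,1}$.

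The one place where the two arguments differ is the coefficient lower bound. The paper packages the map $R_i\mapsto R_i'$ as right-multiplication by an explicit lower-triangular matrix $A_w$ (with $w=n^{-1/2}z$) and devotes a separate lemma to computing the singular values of $A_w$ exactly, obtaining $s_n(A_{n^{-1/2}z})\to (2+|z|^2+|z|\sqrt{4+|z|^2})^{-1/2}\sqrt{2}$ uniformly on compacta; this gives $\|A_wv'\|_2\geq K^{-1}$ and hence $\max_j|v_j|\geq K^{-1}n^{-1/2}$. Your two-case pigeonhole argument on $w_j=\sqrt{n}\bar u_j-z\bar u_i$ reaches the equivalent conclusion $\max_j|w_j|\geq 1/2$ directly, in three lines, and bypasses the auxiliary matrix and its spectral analysis altogether. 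This is a genuine simplification of that step; everything else---the reduction, the independence of rows, the anti-concentration via bounded density, and the final Borel--Cantelli---matches the paper's proof.
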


The proofs of theorems
\ref{th:singular}-\ref{th:localization}-\ref{th:circular}-\ref{th:least-singular}
are given in sections
\ref{se:singular}-\ref{se:localization}-\ref{se:circular}-\ref{se:least-singular}
respectively. These proofs make heavy use of lemmas given in the appendices
\ref{se:logpot}-\ref{se:spest}-\ref{se:adle}.

The matrix $M$ is the Markov kernel associated to the weighted oriented
complete graph with $n$ vertices with one loop per vertex, for which each edge
$i\to j$ has weight $X_{i,j}$. The skeleton of this kernel is an oriented
Erd\H{o}s-R\'enyi random graph where each edge exists independently of the
others with probability $1-q$. If $q=0$ then $M$ has a complete skeleton, is
aperiodic, and $1$ is the sole eigenvalue of unit module \cite{MR2209438}. The
nonoriented version of this graphical construction gives rise to random
reversible kernels for which a semicircular theorem is available
\cite{bordenave-caputo-chafai}. The bounded density assumption forces $q=0$.

Since $M$ is Markov, we have that for every integer $r\geq0$,
\begin{equation}\label{eq:mom}
  \int_{\mathds{C}}\!z^r\,\mu_M(dz) %
  = \frac{1}{n}\sum_{i=1}^n\lambda_i^r(M) %
  = \frac{1}{n}\sum_{i=1}^n p_M(r,i)
\end{equation}
where 
$$
p_M(r,i):=\sum_{\substack{1\leq i_1,\ldots,i_r\leq n\\i_1=i_r=i}} %
M_{i_1,i_2}\cdots M_{i_{r-1},i_1}
$$
is simply, conditional on $M$, the probability of a loop of length $r$ rooted
at $i$ for a Markov chain with transition kernel $M$. This provides a
probabilistic interpretation of the moments of the empirical spectral
distribution $\mu_M$ of $M$. The random Markov matrix $M$ is a \emph{random
  environment}. By combining theorem \ref{th:localization} with theorem
\ref{th:circular} and the identity \eqref{eq:mom}, we get that for every fixed
$r\geq0$, a.s.\
$$
\lim_{n\to\infty}n^{\frac{r}{2}}%
\left(\frac{1}{n}\sum_{i=1}^np_M(r,i)-\frac{1}{n}\right)=0.
$$

\subsection{Discussion and open questions}

The bounded density assumption in theorem \ref{th:circular} is only due to the
usage of theorem \ref{th:least-singular} in the proof. We believe that theorem
\ref{th:least-singular} (and thus \ref{th:circular}) is valid without this
assumption, but this is outside the scope of the present work, see remark
\ref{rm:singass} and figure \ref{fi:simus}. Our proof of theorem
\ref{th:circular} is inspired from the Tao and Vu proof of \eqref{eq:CLT}
based on Girko Hermitization, and allows actually to go beyond the circular
law, see remarks \ref{rm:CLT}-\ref{rm:beyond}. Concerning the extremes, by
theorems \ref{th:localization}-\ref{th:circular}, a.s.\
$$
\sigma\leq \varliminf_{n\to\infty}|\lambda_2(\sqrt{n}M)| \leq
\varlimsup_{n\to\infty}|\lambda_2(\sqrt{n}M)|\leq 2\sigma.
$$
Also, a.s.\ the ``spectral gap'' of $M$ is a.s.\ of order $1-O(n^{-1/2})$
(compare with the results of \cite{MR1972678}). Note that in contrast with
\eqref{eq:silara}, we have from theorems
\ref{th:singular}-\ref{th:localization}, a.s.
$$
\lim_{n\to\infty} \frac{s_1(M)}{|\lambda_1(M)|}=1.
$$
Numerical simulations suggest that if $\mathds{E}(|X_{1,1}|^4)<\infty$ then
a.s.\
$$
\lim_{n\to\infty}|\lambda_2(\sqrt{n}M)|=\sigma
\quad\text{and thus}\quad
\lim_{n\to\infty}\frac{s_2(M)}{|\lambda_2(M)|}=2.
$$
Unfortunately, our proof of theorem \ref{th:localization} is too perturbative
to extract this result. Following \cite[fig. 2]{djalil-dme}, one can also ask
if the phase $\mathrm{Phase}(\lambda_2(M))=\lambda_2(M)/\ABS{\lambda_2(M)}$
converges in distribution to the uniform law on $[0,2\pi]$ as $n\to\infty$.
Another interesting problem concerns the behavior of $\max_{2\leq k\leq
  n}\mathfrak{Re}(\lambda_k(\sqrt{n}M))$ and the fluctuation as $n\to\infty$
of the extremal singular values and eigenvalues of $\sqrt{n}M$, in particular
the fluctuation of $\lambda_2(\sqrt{n}M)$.

Classical results on the connectivity of Erd\H{o}s-R\'enyi random graphs
\cite{MR0125031,MR1864966} imply that a.s.\ for $n\gg1$ the Markov matrix $M$
is irreducible. Hence, a.s.\ for $n\gg1$, the Markov matrix $M$ admits a
unique invariant probability measure $\kappa$. If $\kappa$ is seen as a row
vector in $\Lambda_n$ then we have $\kappa M=\kappa$. Let
$\Upsilon:=n^{-1}(\delta_1+\cdots+\delta_n)$ be the uniform law on
$\{1,\ldots,n\}$, which can be viewed as the vector
$n^{-1}(1,\ldots,1)\in\Lambda_n$. By denoting $\TV{\cdot}$ the total variation
(or $\ell^1$) distance on $\Lambda_n$, one can ask if a.s.\
$$
\lim_{n\to\infty}\TV{\kappa-\Upsilon}=0.
$$

Recall that the rows of $M$ are i.i.d.\ and follow an exchangeable law
$\eta_n$ on the simplex $\Lambda_n$. By ``exchangeable'' we mean that if
$Z\sim\eta_n$ then for every permutation $\pi$ of $\{1,\ldots,n\}$ the random
vector $(Z_{\pi(1)},\ldots,Z_{\pi(n)})$ follows also the law $\eta_n$. This
gives
$$
0=\mathrm{Var}(1)=\mathrm{Var}(Z_1+\cdots+Z_n)%
=n\mathrm{Var}(Z_1)+n(n-1)\mathrm{Cov}(Z_1,Z_2)
$$
and therefore $\mathrm{Cov}(Z_1,Z_2)=-(n-1)^{-1}\mathrm{Var}(Z_1)\leq0$. One
can ask if the results of theorems
\ref{th:singular}-\ref{th:localization}-\ref{th:circular}-\ref{th:least-singular}
remain essentially valid at least if $M$ is a real $n\times n$ random matrix
with i.i.d.\ rows such that for every $1\leq i,j\neq j'\leq n$,
$$
\mathds{E}(M_{i,j})=\frac{1}{n} %
\quad\text{and}\quad %
0<\mathrm{Var}(M_{i,j})=O(n^{-2}) %
\quad\text{and}\quad %
|\mathrm{Cov}(M_{i,j},M_{i,j'})|=O(n^{-3}).
$$
These rates in $n$ correspond to the Dirichlet Markov Ensemble for which
$X_{1,1}$ follows an exponential law and where $\eta_n$ is the Dirichlet law
$\mathcal{D}_n(1,\ldots,1)$ on the simplex $\Lambda_n$. Another interesting
problem is the spectral analysis of $M$ when the law of $X_{1,1}$ has heavy
tails, e.g.\ $X_{1,1}=V^{-\beta}$ with $2\beta>1$ and where $V$ is a uniform
random variable on $[0,1]$, see for instance \cite{bordenave-caputo-chafai-ii}
for the reversible case. A logical first step consists in the derivation of a
heavy tailed version of \eqref{eq:CLT} for $X$. This program is addressed in a
separate paper \cite{bordenave-caputo-chafai-heavygirko}. In the same spirit,
one may ask about the behavior of $\mu_{X\circ A}$ where ``$\circ$'' denotes
the Schur-Hadamard entrywise product and where $A$ is some prescribed profile
matrix.

\section{Proof of theorem \ref{th:singular}}\label{se:singular}

Let us start by an elementary observation. The second moment $\varsigma_n$ of
$\nu_{\sqrt{n}M}$ is given by
$$
\varsigma_n%
:=\int\!t^2\,\nu_{\sqrt{n}M}(dt)
=\frac{1}{n}\sum_{i=1}^ns_i(\sqrt{n}M)^2
=\mathrm{Tr}(MM^*)
=\sum_{i=1}^n\frac{X_{i,1}^2+\cdots+X_{i,n}^2}{(X_{i,1}+\cdots+X_{i,n})^2}.
$$
By using \eqref{eq:ULLN} together with the standard law of large numbers, we
get that a.s.\
\begin{equation}\label{eq:varbound}
  \varsigma_n \leq \frac{1}{n^2(1+o(1))^2}\sum_{i,j=1}^nX_{i,j}^2 %
  =(1+\sigma^2)+o(1) %
  =O(1).
\end{equation}
It follows by the Markov inequality that a.s.\ the sequence
$(\nu_{\sqrt{n}M)})_{n\geq1}$ is tight. However, we will not rely on tightness
and the Prohorov theorem in order to establish the convergence of
$(\nu_{\sqrt{n}M})_{n\geq1}$. We will use instead a perturbative argument
based on the special structure of $M$ and on \eqref{eq:ULLN}. Namely, since
$\sqrt{n}M=nDn^{-1/2}X$, we get from \eqref{eq:basic3}, for all $1\leq i\leq
n$,
\begin{equation}\label{eq:sisang}
  s_n(nD)s_i(n^{-1/2}X)
  \leq s_i(\sqrt{n}M)
  \leq s_1(nD)s_i(n^{-1/2}X).
\end{equation}
Additionally, we get from \eqref{eq:ULLN} that a.s.\
$$
\lim_{n\to\infty}\max_{1\leq i\leq n}|nD_{i,i}-1|=0 %
\quad\text{and}\quad %
\lim_{n\to\infty}\max_{1\leq i\leq n}|n^{-1}D_{i,i}^{-1}-1|=0.
$$
This gives that a.s.\
\begin{equation}\label{eq:sD}
  s_1(nD)=\max_{1\leq i\leq n}|nD_{i,i}|=1+o(1)
  \quad\text{and}\quad
  s_n(nD)=\min_{1\leq i\leq n}|nD_{i,i}|=1+o(1).
\end{equation}
From \eqref{eq:sisang}, \eqref{eq:sD}, and \eqref{eq:sn-exp-pol}, we get that
a.s.\ for $n\gg1$,
\begin{equation}\label{eq:isinv}
  s_n(\sqrt{n}M)>0
  \quad\text{and}\quad
  s_n(n^{-1/2}X)>0
\end{equation}
and from \eqref{eq:sisang} and \eqref{eq:sD} again we obtain that a.s.\ 
\begin{equation}\label{eq:difflog}
  \max_{1\leq i\leq n}
  \ABS{\log(s_i(\sqrt{n}M))-\log(s_i(n^{-1/2}X))}
  =o(1).
\end{equation}
Now, from \eqref{eq:QCLT}, and by denoting $\mathcal{L}_\sigma$ the image
probability measure of $\mathcal{Q}_\sigma$ by $\log(\cdot)$, a.s.\
$$
\frac{1}{n}\sum_{i=1}^n\delta_{\log(s_i(n^{-1/2}X))}
\underset{n\to\infty}{\overset{\mathscr{C}_b}{\longrightarrow}}%
\mathcal{L}_\sigma. 
$$
Next, using \eqref{eq:difflog} with  lemma \ref{le:perturb}
provides that a.s.\ 
$$
\frac{1}{n}\sum_{i=1}^n\delta_{\log(s_i(\sqrt{n}M))}
\underset{n\to\infty}{\overset{\mathscr{C}_b}{\longrightarrow}}%
\mathcal{L}_\sigma. 
$$
This implies by the change of variable $t\mapsto e^t$ that a.s.\
$$
\nu_{\sqrt{n}M}
=\frac{1}{n}\sum_{i=1}^n\delta_{s_i(\sqrt{n}M)}
\underset{n\to\infty}{\overset{\mathscr{C}_b}{\longrightarrow}}%
\mathcal{Q}_\sigma 
$$
which is the desired result. 

\begin{rem}[Alternative arguments] From \eqref{eq:sisang} and \eqref{eq:sD},
  a.s.\ for $n\gg1$, $s_k(M)=0$ iff $s_k(X)=0$, for all $1\leq k\leq n$, and
  thus $\nu_{\sqrt{n}M}(\{0\})=\nu_{n^{-1/2}X}(\{0\})$, and hence the
  reasoning can avoid the usage of \eqref{eq:isinv}. Note also that
  \eqref{eq:isinv} is automatically satisfied when $X_{1,1}$ is absolutely
  continuous since the set of singular matrices has zero Lebesgue measure. On
  the other hand, it is also worthwhile to mention that if
  $\mathds{E}(|X_{1,1}|^4)<\infty$ then one can obtain the desired result
  without using \eqref{eq:isinv}, by using lemma \ref{le:perturb} together
  with \eqref{eq:deltasi}, and this reasoning was already used by Aubrun for a
  slightly different model \cite{MR2280648}.
\end{rem}

\section{Proof of theorem \ref{th:localization}}\label{se:localization}

Let us define the $n\times n$ deterministic matrix
$S:=\mathds{E}(X)=(1,\ldots,1)^\top(1,\ldots,1)$. The random matrix
$n^{-1/2}(X-S)$ has i.i.d.\ centered entries with finite positive variance
$\sigma^2$ and finite fourth moment, and consequently, by \eqref{eq:s1X},
a.s.\
$$
s_1(n^{-1/2}(X-S))=2\sigma+o(1).
$$
Now, since $\mathrm{rank}(n^{-1/2}S)=1$ we have, by lemma \ref{le:thompson},
$$
s_2(n^{-1/2}X)\leq s_1(n^{-1/2}(X-S))
$$
and therefore, a.s.\ 
\begin{equation}\label{eq:s2X}
  s_2(n^{-1/2}X)\leq 2\sigma+o(1).
\end{equation}
By combining \eqref{eq:sisang} with \eqref{eq:sD} and \eqref{eq:s2X} we get
that a.s.\ 
\begin{equation}\label{eq:deltasi}
  \max_{2\leq i\leq n}|s_i(\sqrt{n}M)-s_i(n^{-1/2}X)|=o(1).
\end{equation}
In particular, this gives from \eqref{eq:s2X} that a.s.\ 
\begin{equation}\label{eq:s2Msup}
  s_2(\sqrt{n}M)\leq 2\sigma+o(1).
\end{equation}
From theorem \ref{th:singular}, since $\mathcal{Q}_\sigma$ is supported by
$[0,2\sigma]$, we get by using \eqref{eq:s2Msup} that a.s.\
\begin{equation}\label{eq:s2}
  \lim_{n\to\infty}s_2(\sqrt{n}M)=2\sigma.
\end{equation}
Next, since $M$ is a Markov matrix, it is known that
\begin{equation}\label{eq:la1}
  \lambda_1(M)=1.
\end{equation}
Let us briefly recall the proof. If $u:=(1,\ldots,1)^\top$ then $Mu=u$ and
thus $1$ is an eigenvalue of $M$. Next, let $\lambda\in\mathds{C}$ be an
eigenvalue of $M$ and let $x\in\mathds{C}^n$ be such that $x\neq0$ and
$Mx=\lambda x$. There exists $1\leq i\leq n$ such that
$|x_i|=\max\{|x_1|,\ldots,|x_n|\}$. Since $|x_i|\neq0$ and
$$
|\lambda||x_i|\leq \sum_{j=1}^nM_{i,j}|x_j|\leq
|x_i|\sum_{j=1}^nM_{i,j}=|x_i|,
$$
we get $|\lambda|\leq1$, which implies \eqref{eq:la1}. Let us show now that
a.s.
\begin{equation}\label{eq:s1}
  \lim_{n\to\infty}s_1(M)=1
\end{equation}
Let $S$ be as in the proof of theorem \ref{th:singular}. From
\eqref{eq:basic0} and \eqref{eq:s1X}, we get a.s.
\begin{align*}
  s_1(\sqrt{n}M)
  &\leq s_1(nD)s_1(n^{-1/2}X) \\
  &\leq s_1(nD)s_1(n^{-1/2}(X-S)+n^{-1/2}S) \\
  &\leq s_1(nD)(s_1(n^{-1/2}(X-S))+s_1(n^{-1/2}S)) \\
  &=(1+o(1))(2\sigma+o(1)+n^{1/2})
\end{align*}
which gives $\varlimsup_{n\to\infty}s_1(M)\leq1$ a.s. On the other hand, from
\eqref{eq:weyl0} and \eqref{eq:la1} we get $s_1(M)\geq\ABS{\lambda_1(M)}=1$,
which gives \eqref{eq:s1}. It remains to establish that a.s.\
$$
\varlimsup_{n\to\infty}|\lambda_2(\sqrt{n}M)|\leq 2\sigma.
$$
Indeed, from \eqref{eq:weyl0} we get for every non null $n\times n$ complex
matrix $A$,
$$
|\lambda_2(A)|\leq
\frac{s_1(A)s_2(A)}{|\lambda_1(A)|}.
$$
With $A=\sqrt{n}M$ and by using (\ref{eq:s2}-\ref{eq:la1}-\ref{eq:s1}), we
obtain that a.s.\
$$
\varlimsup_{n\to\infty} |\lambda_2(\sqrt{n}M)| %
\leq \varlimsup_{n\to\infty}s_2(\sqrt{n}M)\varlimsup_{n\to\infty}s_1(M)%
= 2\sigma.
$$

\section{Proof of theorem \ref{th:circular}}\label{se:circular}

Let us start by observing that from the Weyl inequality \eqref{eq:weyl3}
and \eqref{eq:varbound}, a.s.\
$$
\int_{\mathds{C}}\!|z|^2\,\mu_{\sqrt{n}M}(dt) 
\leq \int_0^\infty\!t^2\,\nu_{\sqrt{n}M}(dt) 
= O(1).
$$
This shows via the Markov inequality that a.s.\ the sequence
$(\mu_{\sqrt{n}M})_{n\geq1}$ is tight. However, we will not rely directly on
this tightness and the Prohorov theorem in order to establish the convergence
of $(\mu_{\sqrt{n}M})_{n\geq1}$. We will use instead the Girko Hermitization
of lemma \ref{le:girko}. We know, from the work of Dozier and Silverstein
\cite{MR2322123}, that for all $z\in\mathds{C}$, there exists a probability
measure $\nu_z$ on $[0,\infty)$ such that a.s.\
$(\nu_{n^{-1/2}X-zI})_{n\geq1}$ converges weakly to $\nu_z$. Moreover,
following e.g.\ Pan and Zhou \cite[lem. 3]{1687963}, one can check that for
all $z\in\mathds{C}$,
$$
U_{\mathcal{U}_\sigma}(z)=-\int_0^\infty\!\log(t)\,\nu_z(dt).
$$
To prove that a.s.\ $(\mu_{\sqrt{n}M})_{n\geq1}$ tends weakly to
$\mathcal{U}_\sigma$, we start from the decomposition
$$
\sqrt{n}M-zI
=nDW
\quad\text{where}\quad
W:=n^{-1/2}X-zn^{-1}D^{-1}.
$$
By using \eqref{eq:basic1}, \eqref{eq:ULLN}, and lemma \ref{le:perturb}, we
obtain that for a.a.\ $z\in\mathds{C}$, a.s.\ 
$$
\nu_W %
\underset{n\to\infty}{\overset{\mathscr{C}_b}{\longrightarrow}} %
\nu_z.
$$
Now, arguing as in the proof of theorem \ref{th:singular}, it follows that for
all $z\in\mathds{C}$, a.s.\ 
$$
\nu_{\sqrt{n}M-zI} %
\underset{n\to\infty}{\overset{\mathscr{C}_b}{\longrightarrow}} %
\nu_z.
$$
Suppose for the moment that for a.a.\ $z\in\mathds{C}$, a.s.\ the function
$\log(\cdot)$ is uniformly integrable for $(\nu_{\sqrt{n}M-zI})_{n\geq1}$. Let
$\mathcal{P}(\mathds{C})$ by as in section \ref{se:logpot}. Lemma
\ref{le:girko} implies that there exists $\mu\in\mathcal{P}(\mathds{C})$ such
that a.s.\
$$
\mu_{\sqrt{n}M} %
\underset{n\to\infty}{\overset{\mathscr{C}_b}{\longrightarrow}} %
\mu
\quad\text{and}\quad
U_\mu=U_{\mathcal{U}_\sigma}\text{ a.e.}
$$
where $U_\mu$ is the logarithmic potential of $\mu$ as defined in section
\ref{se:logpot}. Now by lemma \ref{le:unicity}, we obtain
$\mu=\mathcal{U}_\sigma$, which is the desired result. It thus remains to show
that for a.a.\ $z\in\mathds{C}$, a.s.\ the function $\log(\cdot)$ is uniformly
integrable for $(\nu_{\sqrt{n}M-zI})_{n\geq1}$. For every $z\in\mathds{C}$,
a.s.\ by the Cauchy-Schwarz inequality, for all $t\geq1$, and $n\gg1$,
$$
\PAR{\int_t^\infty\!\log(s)\,\nu_{\sqrt{n}M-zI}(ds)}^2
\leq \nu_{\sqrt{n}M-zI}([t,\infty))\int_0^\infty\!s^2\,\nu_{\sqrt{n}M-zI}(ds).
$$
Now the Markov inequality and \eqref{eq:varbound} give that for all
$z\in\mathds{C}$, a.s.\ for all $t\geq1$
$$
\int_t^\infty\!\log(s)\,\nu_{\sqrt{n}M-zI}(ds) \leq \frac{O(1)}{t^2}
$$
where the $O(1)$ is uniform in $t$. Consequently, for all $z\in\mathds{C}$,
a.s.\
$$
\lim_{t\to\infty}\varlimsup_{n\to\infty}\int_t^\infty\!\log(s)\,\nu_{\sqrt{n}M-zI}(ds)=0.
$$
This means that for all $z\in\mathds{C}$, a.s.\ the function
$\mathds{1}_{[1,\infty)}\log(\cdot)$ is uniformly integrable for
$(\nu_{\sqrt{n}M-zI})_{n\geq1}$. It remains to show that for all
$z\in\mathds{C}$, a.s. the function $\mathds{1}_{(0,1)}\log(\cdot)$ is
uniformly integrable for $(\nu_{\sqrt{n}M-zI})_{n\geq1}$. This is equivalent
to show that for all $z \in \mathds{C}$, a.s.
$$
\lim_{\delta \to 0} \varlimsup_{n\to\infty} 
\int_0^\delta\! -\log(s)\,\nu_{\sqrt n  M - z I}(ds) = 0.
$$
For convenience, we fix $z\in\mathds{C}$ and set $s_i := s_i (\sqrt{n}M - zI)$ for
all $1 \leq i \leq n$. Now we write
\begin{align*}
  -\int_0^\delta\!\log(t)\,\nu_{\sqrt n M - z I}(dt) %
  &= \frac{1}{n}\!\!\sum_{i=0}^{\FLOOR{2 n^{0.99}}}%
  \!\!\!\!\mathds{1}_{(0,\delta)}(s_{n-i})\log(s_{n-i}^{-1}) %
  +\frac{1}{n}\!\!\sum_{i=\FLOOR{2 n^{0.99}} +1 }^{ n-1 }
  \!\!\!\!\!\!\mathds{1}_{(0,\delta)}(s_{n-i})\log(s_{n-i}^{-1}) \\
  & \leq \frac{\log(s_{n}^{-1})}{n}\!\!\sum_{i=0}^{\FLOOR{2 n^{0.99}}}%
  \!\!\!\!\mathds{1}_{(0,\delta)}(s_{n-i}) %
  + \frac{1}{n}\!\!\sum_{i=\FLOOR{2 n^{0.99}} +1 }^{ n-1 } %
  \!\!\!\!\mathds{1}_{(0,\delta)}(s_{n-i})\log (s_{n-i}^{-1}).
\end{align*}
From theorem \ref{th:least-singular} (here we need the bounded density
assumption) we get that a.s.\
$$
\lim_{\delta\to0}\varlimsup_{n\to\infty}
\frac{\log(s_{n}^{-1})}{n}\!\!\sum_{i=0}^{\FLOOR{2 n^{0.99}}}%
  \!\!\!\!\mathds{1}_{(0,\delta)}(s_{n-i})
=0
$$
and it thus remains to show that a.s.\
$$
\lim_{\delta \to 0} \varlimsup_{n\to\infty}\frac{1}{n} 
\sum_{i=\FLOOR{2 n^{0.99}} +1 }^{ n-1 }%
\mathds{1}_{(0,\delta)}(s_{n-i})\log(s_{n-i}^{-1}) = 0.
$$
This boils down to show that there exists $c_0>0$ such a.s.\ for $n\gg1$ and
$2 n^{0.99} \leq i \leq n-1$,
\begin{equation}\label{eq:domsg}
s_{n-i} \geq c_0 \frac i n.  
\end{equation}
To prove it, we adapt an argument due to Tao and Vu \cite{tao-vu-cirlaw-bis}.
We fix $2 n^{0.99} \leq i \leq n-1$ and we consider the matrix $M'$ formed by
the first $n- \CEIL{i/2}$ rows of
$$
\sqrt{n}(\sqrt{n}M-zI)=nDX-\sqrt{n}zI.
$$
By the Cauchy interlacing lemma \ref{le:cauchy}, we get
$$
n ^{-1/2} s'_{n-i} \leq  s_{n-i}
$$
where $s'_j := s_j (M')$ for all $1\leq j \leq n - \CEIL{i/2}$ are the
singular values of the rectangular matrix $M'$ in nonincreasing order. Next,
by the Tao and Vu negative moment lemma \ref{le:tvneg},
$$
s'^{-2}_1  + \cdots + s'^{-2}_{n -  \CEIL{i/2}} %
= \mathrm{dist}_1^{-2}+\cdots+\mathrm{dist}_{n -\CEIL{i/2}}^{-2}, 
$$
where $\mathrm{dist}_j$ is the distance from the $j^\text{th}$ row of $M'$ to $H_j$,
the subspace spanned by the other rows of $M'$. In particular, we have
\begin{equation}\label{eq:stodist}
  \frac i 2 s^{-2}_{n-i} \leq n \sum_{j=1}^{n-\CEIL{i/2}}\mathrm{dist}_{j}^{-2}. 
\end{equation}
Let $R_j$ be the $j^\text{th}$ row of $X$. 
Since the $j^\text{th}$ row of $M$ is $D_{j,j}R_j$, we deduce that
$$
\mathrm{dist}_{j} %
= \mathrm{dist}( n D_{j,j} R_j-z \sqrt{n}e_j, H_j) %
\geq n D_{j,j} \mathrm{dist}( R_j , \mathrm{span}(H_j, e_j))
$$
where $e_1,\ldots,e_n$ is the canonical basis of $\mathds{R}^n$.
Since $\mathrm{span}(H_j, e_j )$ is independent of $R_j$
and
$$
\mathrm{dim}(\mathrm{span}(H_j,e_j)) \leq n - \frac{i}{2} \leq n - n^{0.99},
$$
lemma \eqref{le:concdist} gives
$$
\sum_{n\gg1}\mathds{P}\PAR{\bigcup_{i=2n^{0.99}}^{n-1}\bigcup_{j=1}^{n-\CEIL{i/2}}
\BRA{\mathrm{dist}(R_j,\mathrm{span}(H_j,e_j))\leq\frac{\sigma\sqrt{i}}{2\sqrt{2}}}
}<\infty
$$
(note that the exponential bound in lemma \ref{le:concdist} kills the
polynomial factor due to the union bound over $i,j$). Consequently, by the
first Borel-Cantelli lemma, we obtain that a.s.\ for $n\gg1$, all $2 n^{0.99}
\leq i \leq n-1$, and all $1 \leq j \leq n - \CEIL{i/2}$,
$$
\mathrm{dist}_j %
\geq nD_{j,j}\frac{\sigma\sqrt{i}}{2\sqrt{2}} %
= \sqrt{i}\frac{\sigma }{2\sqrt{2}}\frac{n}{\rho_{n,j}}.
$$
Now, the uniform law of large numbers \eqref{eq:ULLN} gives that a.s.
$$
\lim_{n\to\infty}\max_{ 1\leq j \leq n} \ABS{\frac{\rho_{n,j}}{n} - 1}= 0. 
$$
We deduce that a.s.\ for $n\gg1$, all $2 n^{0.99} \leq i \leq n-1$, and all $1
\leq j \leq n - \CEIL{i/2}$,
$$
\mathrm{dist}_j %
\geq \sqrt{i}\frac{\sigma}{4}
$$
Finally, from \eqref{eq:stodist} we get
$$
s^{2}_{n-i}\geq \frac {i^2}{n^2}\frac{\sigma^2}{32},
$$
and \eqref{eq:domsg} holds with $c_0 := \sigma/(4\sqrt{2})$.

\begin{rem}[Proof of the circular law \eqref{eq:CLT} and beyond]\label{rm:CLT}
  The same strategy allows a relatively short proof of \eqref{eq:CLT}. Indeed,
  the a.s.\ weak convergence of $(\mu_{n^{-1/2}X})_{n\geq1}$ to
  $\mathcal{U}_{\sigma}$ follows from the Girko Hermitization lemma
  \ref{le:girko} and the uniform integrability of $\log(\cdot)$ for
  $(\nu_{n^{-1/2}X-zI})_{n\geq1}$ as above using \eqref{eq:sn-exp-pol}. This
  direct strategy does not rely on the \emph{replacement principle} of Tao and
  Vu. The replacement principle allows a statement which is more general than
  \eqref{eq:CLT} involving two sequences of random matrices (the main result
  of \cite{tao-vu-cirlaw-bis} on universality). Our strategy allows to go
  beyond the circular law \eqref{eq:CLT}, by letting $\mathds{E}(X_{i,j})$
  possibly depend on $i,j,n$, provided that \eqref{eq:sn-exp-pol} and the
  result of Dozier and Silverstein \cite{MR2322123} hold. Set
  $A:=(\mathds{E}(X_{i,j}))_{1\leq i,j\leq n}$. If $\mathrm{Tr}(AA^*)$ is
  large enough for $n\gg1$, the limit is no longer the circular law, and can
  be interpreted by using free probability theory \cite{MR1929504}.
\end{rem}

\begin{rem}[Beyond the circular law]\label{rm:beyond} 
  It is likely that the Tao and Vu replacement principle
  \cite{tao-vu-cirlaw-bis} allows a universal statement for our random Markov
  matrices of the form $M=DX$, beyond the circular law, by letting
  $\mathds{E}(X_{i,j})$ possibly depend on $i,j,n$. This is however beyond the
  scope of the present work.
\end{rem}

\section{Proof of theorem \ref{th:least-singular}}
\label{se:least-singular}

Note that when $z=0$, one can get some $b$ immediately from (\ref{eq:basic2},
\ref{eq:sn-exp-pol}, \ref{eq:ULLN}). Thus, our problem is actually to deal
with $z\neq0$. Fix $a,C>0$ and $z\in\mathds{C}$ with $|z|\leq C$. We have
$$
\sqrt{n}M-zI=\sqrt{n}DY
\quad\text{where}\quad 
Y:=X-n^{-1/2}zD^{-1}.
$$
For an arbitrary $\delta_n>0$, let us define the event
$$
\mathcal{A}_n %
:=\bigcap_{i=1}^n\BRA{\ABS{\frac{\rho_{n,i}}{n}-1}\leq \delta_n}.
$$
By using the union bound and the Chebyshev inequality, we get
$\mathds{P}(\mathcal{A}_n^c)\leq \sigma^2 \delta_n^{-2}$. Now with $c > a/2$
and $\delta_n=n^c$ we obtain $\mathds{P}(\mathcal{A}_n^c)\leq n^{-a}$ for
$n\gg1$. Since we have 
$$
s_n(D)^{-1}=\max_{1\leq i\leq n}|\rho_{n,i}|,
$$
we get by \eqref{eq:basic2}, on the event $\mathcal{A}_n$, for $n\gg1$,
$$
\{s_n(\sqrt{n}M-zI)\leq t_n \} %
\subset \{\sqrt{n}s_n(D)s_n(Y)\leq t_n\} %
\subset \{s_n(Y) \leq \sqrt{n}t_n(1+n^c)\}
$$
for every $t_n>0$. Now, for every $b'>0$, one may select $b>0$ and set
$t_n=n^{-b}$ such that $\sqrt{n}t_n(1+n^c)\leq n^{-b'}$ for $n\gg1$. Thus, on
the event $\mathcal{A}_n$, for $n\gg1$,
$$
\mathcal{M}_n %
:= \{s_n(\sqrt{n}M-zI)\leq n^{-b} \} %
\subset \{s_n(Y) \leq n^{-b'}\} %
=:\mathcal{Y}_n.
$$
Consequently, for every $b'>0$ there exists $b>0$ such that for $n\gg1$,
$$
\mathds{P}(\mathcal{M}_n) %
=\mathds{P}(\mathcal{M}_n\cap\mathcal{A}_n) %
+\mathds{P}(\mathcal{M}_n\cap\mathcal{A}_n^c)  %
\leq \mathds{P}(\mathcal{Y}_n) %
+\mathds{P}(\mathcal{A}_n^c)  %
\leq \mathds{P}(\mathcal{Y}_n)+n^{-a}.
$$
The desired result follows if we show that for some $b'>0$ depending on $a,C$,
for $n\gg1$,
\begin{equation}\label{eq:sn2}
  \mathds{P}(\mathcal{Y}_n) = \mathds{P}(s_n(Y)\leq n^{-b'})\leq n^{-a}.
\end{equation}
Let us prove \eqref{eq:sn2}. At this point, it is very important to realize
that \eqref{eq:sn2} cannot follow form a perturbative argument based on
(\ref{eq:sn-exp-pol},\ref{eq:basic1},\ref{eq:ULLN}) since the operator norm of
the perturbation is much larger that the least singular value of the perturbed
matrix. We thus need a more refined argument. We have $Y=X-wD^{-1}$ with
$w:=n^{-1/2}z$. Let $A_w=A_{n^{-1/2}z}$ be as in lemma \ref{le:A}. For every
$1\leq k\leq n$, let $P_k$ be the $n\times n$ permutation matrix for the
transposition $(1,k)$. Note that $P_1=I$ and for every $1\leq k\leq n$, the
matrix $P_kA_wP_k$ is $n\times n$ lower triangular. For every column vector
$e_i$ of the canonical basis of $\mathds{R}^n$,
$$
(P_kAP_k)e_i=
\begin{cases}
  e_i & \text{if $i\neq k$}, \\
  e_k-w(e_1+\cdots+e_n) & \text{if $i=k$}.
\end{cases}
$$
Now, if $R_1,\ldots,R_n$ 
and $R_1',\ldots,R_n'$ are the rows of the matrices $X$ and $Y$ then
$$
Y %
=\begin{pmatrix}R_1' \\ \vdots \\ R_n'\end{pmatrix}
=\begin{pmatrix}R_1P_1A_wP_1 \\ \vdots \\ R_nP_nA_wP_n\end{pmatrix}.
$$
Define the vector space $R_{-i}':=\mathrm{span}\{R_j:j\neq i\}$ for every
$1\leq i\leq n$. From lemma \ref{le:rvdist},
$$
\min_{1\leq i\leq n}\mathrm{dist}(R_i',R_{-i}') 
\leq \sqrt{n}\,s_n(Y).
$$
Consequently, by the union bound, for any $u\geq0$,
$$
\mathds{P}(\sqrt{n}\,s_n(Y)\leq u) %
\leq n\max_{1\leq i\leq n}\mathds{P}(\mathrm{dist}(R_i',R_{-i}')%
\leq u).
$$
The law of $\mathrm{dist}(R_i',R_{-i}')$ does not depend on $i$. We take
$i=1$. Let $V'$ be a unit normal vector to $R_{-1}'$. Such a vector is not
unique, but we just pick one, and this defines a random variable on the unit
sphere $\mathds{S}^{n-1}:=\{x\in\mathds{C}^n:\NRM{x}_2=1\}$. Since $V' \in
R_{-1}'^\perp$ and $\NRM{V'}_2=1$,
$$
|R_1'\cdot V'|\leq \mathrm{dist}(R_1',R_{-1}').
$$
Let $\nu$ be the distribution of $V'$ on $\mathds{S}^{n-1}$. Since $V'$ and
$R_1'$ are independent, for any $u\geq0$,
$$
\mathds{P}(\mathrm{dist}(R_1',R_{-1}')\leq u)
\leq \mathds{P}(|R_1'\cdot V'|\leq u)
= \int_{\mathds{S}^{n-1}}\!\!\!\mathds{P}(|R_1'\cdot v'|\leq u)\,d\nu(v').
$$
Let us fix $v'\in\mathds{S}^{n-1}$. If $A_w,P_1=I,R_1$ are as above then
$$
R_1'\cdot v'=R_1\cdot v
\quad\text{where}\quad v:=P_1A_wP_1v'=A_wv'.
$$
Now, since $v'\in\mathds{S}^{n-1}$, lemma \ref{le:A} provides a constant $K>0$
such that for $n\gg1$,
$$
\NRM{v}_2 %
=\NRM{A_wv'}_2 %
\geq \min_{x\in\mathds{S}^{n-1}}\NRM{A_wx}_2 %
=s_n(A_w)\geq K^{-1}.
$$
But $\NRM{v}_2\geq K^{-1}$ implies $|v_{j}|^{-1}\leq K\sqrt{n}$ for some
$j\in\{1,\ldots,n\}$, and therefore
$$
|\mathfrak{Re}(v_{j})|^{-1}\leq K\sqrt{2n}
\quad\text{or}\quad
|\mathfrak{Im}(v_{j})|^{-1}\leq K\sqrt{2n}.
$$
Suppose for instance that we have $|\mathfrak{Re}(v_{j})|^{-1}\leq
K\sqrt{2n}$. We first observe that
$$
\mathds{P}(|R_1'\cdot v'|\leq u) 
=\mathds{P}(|R_1\cdot v|\leq u) 
\leq \mathds{P}(|\mathfrak{Re}(R_1\cdot v)|\leq u).
$$
The real random variable $\mathfrak{Re}(R_1\cdot v)$ is a sum of independent
real random variables and one of them is $X_{1,j}\mathfrak{Re}(v_{j})$, which
is absolutely continuous with a density bounded above by $BK\sqrt{2n}$ where
$B$ is the bound on the density of $X_{1,1}$. Consequently, by a basic
property of convolutions of probability measures, the real random variable
$\mathfrak{Re}(R_1\cdot v)$ is also absolutely continuous with a density
$\varphi$ bounded above by $BK\sqrt{2n}$, and therefore,
$$
\mathds{P}(|\mathfrak{Re}(R_1\cdot v)|\leq u)
= \int_{[-u,u]}\,\varphi(s)\,ds
\leq BK\sqrt{2n}2u.
$$  
To summarize, for $n\gg1$ and every $u\geq0$,
$$
\mathds{P}(\sqrt{n}s_n(Y)\leq u) \leq BK(2n)^{3/2}u.
$$
Lemma \ref{le:A} shows that the constant $K$ may be chosen depending on $C$
and not on $z$, and \eqref{eq:sn2} holds with $b'=d+1/2$ by taking $u=n^{-d}$
such that $BK(2n)^{3/2}n^{-d}\leq n^{-a}$ for $n\gg1$.

\begin{rem}[Assumptions]\label{rm:singass}
  Our proof of theorem \ref{th:least-singular} still works if the entries of
  $X$ are just independent and not necessarily i.i.d.\ provided that the
  densities are uniformly bounded and that \eqref{eq:ULLN} holds. The bounded
  density assumption allows to bound the small ball probability
  $\mathds{P}(\ABS{R_1\cdot v}\leq u)$ uniformly over $v$. If this assumption
  does not hold, then the small ball probability may depend on the additive
  structure of $v$, but the final result is probably still valid. A possible
  route, technical and uncertain, is to adapt the Tao and Vu proof of
  \eqref{eq:sn-exp-pol}. On the opposite side, if $X_{1,1}$ has a
  $\log$-concave density (e.g.\ exponential) then a finer bound might follow
  from a noncentered version of the results of Adamczak et al
  \cite{MR2441920}. Alternatively, if $X_{1,1}$ has sub-Gaussian or
  sub-exponential moments then one may also try to adapt the proof of Rudelson
  and Vershynin \cite{MR2407948} to the noncentered settings.
\end{rem}

\begin{rem}[Away from the limiting support]
  The derivation of an a.s.\ lower bound on $s_n(\sqrt{n}M-zI)$ is an easy
  task when $|z|>2\sigma$ and $\mathds{E}(|X_{1,1}|^4)<\infty$, without
  assuming that $X_{1,1}$ has a bounded density. Let us show for instance that
  for every $z\in\mathds{C}$, a.s.
  \begin{equation}\label{eq:sn3}
    s_n(\sqrt{n}M-zI) \geq |z|-2\sigma+o(1).
  \end{equation}
  This lower bound is meaningful only when $|z|>2\sigma$. For proving
  \eqref{eq:sn3}, we adopt a perturbative approach. Let us fix
  $z\in\mathds{C}$. By \eqref{eq:basic2} and \eqref{eq:ULLN} we get that a.s.
  \begin{equation}\label{eq:sn4}
    s_n(\sqrt{n}M-zI) \geq n^{-1}(1+o(1))\,s_n(\sqrt{n}X-zD^{-1}).
  \end{equation}
  Now we write, with $S=\mathbb{E}X=(1,\ldots,1)(1,\ldots,1)^\top$,
  $$
  \sqrt{n}X-zD^{-1} %
  = \sqrt{n}S-znI+W \quad\text{where}\quad W:=\sqrt{n}(X-S)+nzI-zD^{-1}.
  $$
  We observe that \eqref{eq:basic1} gives
  $$
  s_n(\sqrt{n}X-zD^{-1})\geq s_n(\sqrt{n}S-znI)-s_1(W).
  $$
  For the symmetric complex matrix $\sqrt{n}S-znI$ we have for any
  $z\in\mathds{C}$ and $n\gg1$,
  $$
  s_n(\sqrt{n}S-znI)=n\min(|z|,|\sqrt{n}-z|)=n|z|.
  $$
  On the other hand, since $\mathds{E}(|X_{1,1}|^4)<\infty$, by \eqref{eq:s1X}
  and \eqref{eq:ULLN}, a.s.\ for every $z\in\mathds{C}$,
  $$
  s_1(W) \leq s_1(\sqrt{n}(X-S))+s_1(nzI-zD^{-1}) =n(2\sigma+o(1))+|z|no(1).
  $$
  Putting all together, we have shown that a.s.\ for any $z\in\mathds{C}$,
  $$
  s_n(\sqrt{n}X-zD^{-1}) \geq n|z|(1-o(1))-n(2\sigma+o(1)).
  $$
  Combined with \eqref{eq:sn4}, this gives finally \eqref{eq:sn3}.
\end{rem}

\begin{rem}[Invertibility]
  Let $(A_n)_{n\geq1}$ be a sequence of complex random matrices where $A_n$ is
  $n\times n$ for every $n\geq1$, defined on a common probability space
  $(\Omega,\mathcal{A},\mathds{P})$. For every $\omega\in\Omega$, the set
  $\cup_{n\geq1}\{\lambda_1(A_n(\omega)),\ldots,\lambda_n(A_n(\omega))\}$ is
  at most countable and has thus zero Lebesgue measure. Therefore, for
  \textbf{all} $\omega\in\Omega$ and \textbf{a.a.}\ $z\in\mathds{C}$, we have
  $s_n(A_n(\omega)-zI)>0$ for all \textbf{all} $n\geq1$. Note that
  \eqref{eq:sn-exp-pol} and theorem \ref{th:least-singular} imply respectively
  that for $A_n=X$ or $A_n=M$, this holds for \textbf{all} $z\in\mathds{C}$,
  \textbf{a.s.}\ on $\omega$, and $n\boldsymbol{\gg}1$.
\end{rem}

\appendix

\section{Logarithmic potential and Hermitization}\label{se:logpot}

Let $\mathcal{P}(\mathds{C})$ be the set of probability measures on
$\mathds{C}$ which integrate $\log\ABS{\cdot}$ in a neighborhood of infinity.
For every $\mu\in\mathcal{P}(\mathds{C})$, the \emph{logarithmic potential}
$U_\mu$ of $\mu$ on $\mathds{C}$ is the function
$U_\mu:\mathds{C}\to(-\infty,+\infty]$ defined for every $z\in\mathds{C}$ by
\begin{equation}\label{eq:logpot}
  U_\mu(z)=-\int_{\mathds{C}}\!\log|z-z'|\,\mu(dz')
  =-(\log\ABS{\cdot}*\mu)(z).
\end{equation}
For instance, for the circular law $\mathcal{U}_1$ of density
$\pi^{-1}\mathds{1}_{\{z\in\mathds{C}:|z|\leq1\}}$, we have, for every
$z\in\mathds{C}$,
$$
U_{\mathcal{U}_1}(z)=
\begin{cases}
  -\log|z| & \text{if $|z|>1$}, \\
  \frac{1}{2}(1-|z|^2) & \text{if $|z|\leq1$},
\end{cases}
$$
see e.g.\ \cite{MR1485778}. Let $\mathcal{D}'(\mathds{C})$ be the set of
Schwartz-Sobolev distributions (generalized functions). Since
$\log\ABS{\cdot}$ is Lebesgue locally integrable on $\mathds{C}$, one can
check by using the Fubini theorem that $U_\mu$ is Lebesgue locally integrable
on $\mathds{C}$. In particular, $U_\mu<\infty$ a.e.\ and
$U_\mu\in\mathcal{D}'(\mathds{C})$. Since $\log\ABS{\cdot}$ is the fundamental
solution of the Laplace equation in $\mathds{C}$, we have, in
$\mathcal{D}'(\mathds{C})$,
\begin{equation}\label{eq:lap}
  \Delta U_\mu=-2\pi\mu.
\end{equation}
This means that for every smooth and compactly supported ``test function''
$\varphi:\mathds{C}\to\mathds{R}$,
$$
\int_{\mathds{C}}\!\Delta\varphi(z)U_\mu(z)\,dz
=-2\pi\int_{\mathds{C}}\!\varphi(z)\,\mu(dz).
$$

\begin{lem}[Unicity]\label{le:unicity}
  For every $\mu,\nu\in\mathcal{P}(\mathds{C})$, if $U_\mu=U_\nu$ a.e.\ then
  $\mu=\nu$.
\end{lem}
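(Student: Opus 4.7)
The plan is to use the distributional Poisson identity \eqref{eq:lap}, which asserts $\Delta U_\mu = -2\pi\mu$ in $\mathcal{D}'(\mathds{C})$, and run it in reverse: if two measures produce the same logarithmic potential (a.e.), then they must produce the same distributional Laplacian, and hence coincide.

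First, I would note that since $\log\ABS{\cdot}$ is locally integrable, each of $U_\mu$ and $U_\nu$ is locally integrable on $\mathds{C}$, as recalled just before \eqref{eq:lap}. Hence both represent well-defined Schwartz--Sobolev distributions in $\mathcal{D}'(\mathds{C})$, and two locally integrable functions that agree a.e.\ define the same element of $\mathcal{D}'(\mathds{C})$. Consequently the hypothesis $U_\mu = U_\nu$ a.e.\ upgrades to $U_\mu = U_\nu$ as distributions.

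Second, since the distributional Laplacian $\Delta : \mathcal{D}'(\mathds{C})\to\mathcal{D}'(\mathds{C})$ is a well-defined linear operator, applying it to both sides and invoking \eqref{eq:lap} yields $-2\pi\mu = \Delta U_\mu = \Delta U_\nu = -2\pi\nu$ as elements of $\mathcal{D}'(\mathds{C})$. Unpacking this: for every smooth compactly supported $\varphi:\mathds{C}\to\mathds{R}$,
$$
\int_{\mathds{C}}\!\varphi(z)\,\mu(dz)=\int_{\mathds{C}}\!\varphi(z)\,\nu(dz).
$$
Finally, since $C_c^\infty(\mathds{C})$ is dense in $C_c(\mathds{C})$ in the uniform topology and both $\mu,\nu$ are finite Radon measures, this equality of integrals extends to all of $C_c(\mathds{C})$, and the Riesz representation theorem forces $\mu=\nu$.

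There is no real obstacle here; the only subtlety I would be careful about is the transition from the a.e.\ equality of functions to equality as distributions (which hinges on local integrability, already built into the definition of $\mathcal{P}(\mathds{C})$ via the $\log\ABS{\cdot}$ integrability at infinity together with local integrability of $\log\ABS{\cdot}$ near the origin), and then from equality of measures as distributions to equality as Borel measures (which is the standard density argument). Everything else is a direct quotation of \eqref{eq:lap}.
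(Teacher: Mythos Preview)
Your proof is correct and follows essentially the same approach as the paper: pass from the a.e.\ equality $U_\mu=U_\nu$ to equality in $\mathcal{D}'(\mathds{C})$, apply \eqref{eq:lap} to get $\mu=\nu$ in $\mathcal{D}'(\mathds{C})$, and conclude equality as measures since $\mu,\nu$ are Radon. Your write-up is in fact more explicit than the paper's about the two transition steps (a.e.\ to distributional, distributional to Borel measure), but the argument is the same.
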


\begin{proof}
  Since $U_\mu=U_\nu$ in $\mathcal{D}'(\mathds{C})$, we get $\Delta
  U_\mu=\Delta U_\nu$ in $\mathcal{D}'(\mathds{C})$. Now \eqref{eq:lap} gives
  $\mu=\nu$ in $\mathcal{D}'(\mathds{C})$, and thus $\mu=\nu$ as measures
  since $\mu$ and $\nu$ are Radon measures.
\end{proof}

If $A$ is an $n\times n$ complex matrix and $P_A(z):=\det(A-zI)$ is its
characteristic polynomial,
$$
U_{\mu_A}(z)
=-\int_{\mathds{C}}\!\log\ABS{z'-z}\,\mu_A(dz')
=-\frac{1}{n}\log\ABS{\det(A-zI)}
=-\frac{1}{n}\log\ABS{P_A(z)}
$$
for every $z\in\mathds{C}\setminus\{\lambda_1(A),\ldots,\lambda_n(A)\}$. We
have also the alternative expression 
\begin{equation}\label{eq:UESD}
  U_{\mu_A}(z)
  =-\frac{1}{n}\log\det(\sqrt{(A-zI)(A-zI)^*})
  =-\int_0^\infty\!\log(t)\,\nu_{A-zI}(dt).
\end{equation}

The identity \eqref{eq:UESD} bridges the eigenvalues with the singular values,
and is at the heart of the following lemma, which allows to deduce the
convergence of $\mu_A$ from the one of $\nu_{A-zI}$. The strength of this
Hermitization lies in the fact that in contrary to the eigenvalues, one can
control the singular values with the entries of the matrix. The price payed
here is the introduction of the auxiliary variable $z$ and the uniform
integrability. We recall that on a Borel measurable space $(E,\mathcal{E})$,
we say that a Borel function $f:E\to\mathds{R}$ is \emph{uniformly integrable} 
for a sequence of probability measures $(\eta_n)_{n\geq1}$ on $E$ when
\begin{equation}
  \lim_{t\to\infty}\varlimsup_{n\to\infty}\int_{\{|f|>t\}}\!|f|\,d\eta_n=0.
\end{equation}
We will use this property as follows: if $(\eta_n)_{n\geq1}$ converges weakly
to $\eta$ and $f$ is continuous and uniformly integrable for
$(\eta_n)_{n\geq1}$ then $f$ is $\eta$-integrable and
$\lim_{n\to\infty}\int\!f\,d\eta_n=\int\!f\,\eta$. The idea of using
Hermitization goes back at least to Girko \cite{MR1080966}. However, the
proofs of lemmas \ref{le:girko} and \ref{le:unimaj} below are inspired from
the approach of Tao and Vu \cite{tao-vu-cirlaw-bis}.

\begin{lem}[Girko Hermitization]\label{le:girko}
  Let $(A_n)_{n\geq1}$ be a sequence of complex random matrices where $A_n$ is
  $n\times n$ for every $n\geq1$, defined on a common probability space.
  Suppose that for a.a.\ $z\in\mathds{C}$, there exists a probability measure
  $\nu_z$ on $[0,\infty)$ such that a.s.\
  \begin{itemize}
  \item[(i)] $(\nu_{A_n-zI})_{n\geq1}$ converges weakly to $\nu_z$ as $n\to\infty$
  \item[(ii)] $\log(\cdot)$ is uniformly integrable for
    $\PAR{\nu_{A_n-zI}}_{n\geq1}$
  \end{itemize}
  Then there exists a probability measure $\mu\in\mathcal{P}(\mathds{C})$ such
  that
  \begin{itemize}
  \item[(j)] a.s.\ $(\mu_{A_n})_{n\geq1}$ converges weakly to $\mu$ as
    $n\to\infty$
  \item[(jj)] for a.a.\ $z\in\mathds{C}$,
    $$
    U_\mu(z)=-\int_0^\infty\!\log(t)\,\nu_z(dt).
    $$
  \end{itemize}
  Moreover, if $(A_n)_{n\geq1}$ is deterministic, then the statements hold
  without the ``a.s.''
\end{lem}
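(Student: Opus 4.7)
The plan is to bridge the eigenvalues and the singular values via the logarithmic potential identity \eqref{eq:UESD}, which gives
$$
U_n(z) := U_{\mu_{A_n}}(z) = -\int_0^\infty\!\log(t)\,\nu_{A_n - zI}(dt)
$$
for every $z$ outside the spectrum of $A_n$. Set $U(z) := -\int_0^\infty\!\log(t)\,\nu_z(dt)$. For each $z$ at which (i) and (ii) hold, the standard fact recalled just before the statement of the lemma (weak convergence plus uniform integrability implies convergence of integrals, applied to $f=\log$ on $(0,\infty)$) yields $U_n(z) \to U(z)$ almost surely. A Fubini argument on the product of the probability space and $\mathds{C}$ equipped with Lebesgue measure then shows that, almost surely, $U_n(z) \to U(z)$ for Lebesgue-almost every $z \in \mathds{C}$.

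The heart of the proof is to upgrade this almost-everywhere pointwise convergence to convergence in $L^1_{\mathrm{loc}}(\mathds{C})$, so that by continuity of $\Delta$ on $\mathcal{D}'(\mathds{C})$ together with \eqref{eq:lap} one obtains $\mu_{A_n} \to \mu$ in $\mathcal{D}'(\mathds{C})$, where $\mu := -(2\pi)^{-1} \Delta U$. To conclude that $\mu$ is a probability measure on $\mathds{C}$ and that the distributional convergence is in fact weak convergence of probability measures, I would establish tightness of $(\mu_{A_n})_{n\geq1}$: for $|z|$ large, assumption (ii) combined with the weak limit $\nu_z$ forces $U(z)$ to behave like $-\log|z|$ at infinity, which by \eqref{eq:lap} confines the mass of $\mu$ to a fixed disk. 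This localisation transfers to $(\mu_{A_n})_{n\geq1}$ because any subsequential weak limit has logarithmic potential equal to $U$ almost everywhere, and Lemma \ref{le:unicity} pins down $\mu$ uniquely. Assertion (jj) is then the identity $U_\mu = U$ a.e.\ by construction, and (j) follows from the uniqueness of the weak limit along subsequences.

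The main obstacle is the passage from pointwise a.e.\ convergence to $L^1_{\mathrm{loc}}$ convergence of $(U_n)_{n\geq1}$, since $U_n(z) = -n^{-1}\log|\det(A_n - zI)|$ has logarithmic singularities at each eigenvalue of $A_n$ and therefore cannot be dominated pointwise by any fixed function. The strategy is to establish uniform local integrability of $(U_n)_{n\geq1}$ on compact sets $K \subset \mathds{C}$, via the representation
$$
\int_K\!U_n(z)\,dz = -\frac{1}{n}\sum_{k=1}^n \int_K\!\log|z - \lambda_k(A_n)|\,dz,
$$
using that $\int_K |\log|z-w||\,dz$ is bounded uniformly for $w$ ranging over any bounded set, together with the tightness of $(\mu_{A_n})_{n\geq1}$ obtained from assumption (ii). Vitali's convergence theorem then combines the pointwise a.e.\ convergence with the uniform local integrability to yield $L^1_{\mathrm{loc}}$ convergence, which closes the argument. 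In the deterministic case the ``a.s.'' is removed throughout without modification.
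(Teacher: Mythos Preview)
Your route via $L^1_{\mathrm{loc}}$ convergence of the potentials differs from the paper's. The paper does not attempt to prove $U_n\to U$ in $L^1_{\mathrm{loc}}$; instead it feeds the Weyl inequalities \eqref{eq:weyl0} (applied to $A_n-zI$) into lemma \ref{le:unimaj}, which directly yields, for each relevant $z$: tightness of $(\mu_{A_n})_{n\geq1}$, uniform integrability of $\log|z-\cdot|$ for $(\mu_{A_n})_{n\geq1}$, and the pointwise convergence $U_{\mu_{A_n}}(z)\to U(z)$. Every accumulation point of the tight sequence then has logarithmic potential $U$ a.e., and lemma \ref{le:unicity} plus Prohorov finishes.

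Your proposal has a genuine gap at the tightness step, and the gap propagates. Your Vitali argument for uniform local integrability of $(U_n)$ explicitly invokes ``the tightness of $(\mu_{A_n})_{n\geq1}$ obtained from assumption (ii)'', but your derivation of that tightness is circular: you argue that the behaviour of $U$ at infinity confines the mass of $\mu$ and that this ``transfers to $(\mu_{A_n})_{n\geq1}$ because any subsequential weak limit has logarithmic potential equal to $U$''. The existence of subsequential weak limits already presupposes tightness. Moreover, you never identify the bridge from hypothesis (ii), which concerns the \emph{singular value} measures $\nu_{A_n-zI}$, to control on the \emph{eigenvalue} measures $\mu_{A_n}$. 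That bridge is exactly the Weyl majorization \eqref{eq:weyl0}: for any $z_0$ where (ii) holds,
\[
\int\!\log^+\!|w-z_0|\,\mu_{A_n}(dw)
=\frac{1}{n}\sum_i\log^+|\lambda_i(A_n-z_0I)|
\leq\frac{1}{n}\sum_i\log^+ s_i(A_n-z_0I)
=\int\!\log^+\!\,d\nu_{A_n-z_0I},
\]
and the right-hand side is bounded by (ii). This is precisely the mechanism the paper packages into lemma \ref{le:unimaj}. Once you have this $\log^+$ moment bound, your uniform local integrability and Vitali steps become sound (since $\int_K|\log|z-w||\,dz\leq c_1+c_2\log^+|w|$), and the rest of your argument goes through. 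Without it, both tightness and the Vitali hypothesis are unestablished.
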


\begin{proof}
  Let $z$ and $\omega$ be such that (i-ii) hold. For every $1\leq k\leq n$,
  define 
  $$
  a_{n,k}:=|\lambda_k(A_n-zI)|
  \quad\text{and}\quad 
  b_{n,k}:=s_k(A_n-zI)
  $$
  and set $\nu:=\nu_z$. Note that $\mu_{A_n-zI}=\mu_{A_n}*\delta_{-z}$. Thanks
  to the Weyl inequalities \eqref{eq:weyl0} and to the assumptions (i-ii), one
  can use lemma \ref{le:unimaj} below, which gives that $(\mu_{A_n})_{n\geq1}$
  is tight, that $\log\ABS{z-\cdot}$ is uniformly integrable for
  $(\mu_{A_n})_{n\geq1}$, and that
  $$
  \lim_{n\to\infty}U_{\mu_{A_n}}(z)=-\int_0^\infty\!\log(t)\,\nu_z(dt)=:U(z).
  $$
  Consequently, a.s.\ $\mu\in\mathcal{P}(\mathds{C})$ and $U_\mu=U$ a.e. for
  every adherence value $\mu$ of $(\mu_{A_n})_{n\geq1}$. Now, since $U$ does
  not depend on $\mu$, by lemma \ref{le:unicity}, a.s.\
  $\PAR{\mu_{A_n}}_{n\geq1}$ has a unique adherence value $\mu$, and since
  $(\mu_n)_{n\geq1}$ is tight, $(\mu_{A_n})_{n\geq1}$ converges weakly to
  $\mu$ by the Prohorov theorem. Finally, by \eqref{eq:lap}, $\mu$ is
  deterministic since $U$ is deterministic, and (j-jj) hold.
\end{proof}

The following lemma is in a way the skeleton of the Girko Hermitization of
lemma \ref{le:girko}. It states essentially a propagation of a uniform
logarithmic integrability for a couple of triangular arrays, provided that a
logarithmic majorization holds between the arrays.

\begin{lem}[Logarithmic majorization and uniform integrability]\label{le:unimaj}
  Let $(a_{n,k})_{1\leq k\leq n}$ and $(b_{n,k})_{1\leq k\leq n}$ be two
  triangular arrays in $[0,\infty)$. Define the discrete probability measures
  $$
  \mu_n:=\frac{1}{n}\sum_{k=1}^n\delta_{a_{n,k}}
  \quad\text{and}\quad
  \nu_n:=\frac{1}{n}\sum_{k=1}^n\delta_{b_{n,k}}.
  $$
  If the following properties hold 
  \begin{itemize}
  \item[(i)] $a_{n,1}\geq\cdots\geq a_{n,n}$ and $b_{n,1}\geq\cdots\geq
    b_{n,n}$ for $n\gg1$,
  \item[(ii)] $\prod_{i=1}^k a_{n,i} \leq \prod_{i=1}^k b_{n,i}$ for every
    $1\leq k\leq n$ for $n\gg1$,
  \item[(iii)] $\prod_{i=k}^n b_{n,i} \leq \prod_{i=k}^n a_{n,i}$ for every
    $1\leq k\leq n$ for $n\gg1$,
  \item[(iv)] $(\nu_n)_{n\geq1}$ converges weakly to some probability measure
    $\nu$ as $n\to\infty$,
  \item[(v)] $\log(\cdot)$ is uniformly integrable for $(\nu_n)_{n\geq1}$,
  \end{itemize}
  then 
  \begin{itemize}
  \item[(j)] $(\mu_n)_{n\geq1}$ is tight,
  \item[(jj)] $\log(\cdot)$ is uniformly integrable for $(\mu_n)_{n\geq1}$,
  \item[(jjj)] we have, as $n\to\infty$,
    $$
    \int_0^\infty\!\log(t)\,\mu_n(dt)
    =\int_0^\infty\!\log(t)\,\nu_n(dt)\to\int_0^\infty\!\log(t)\,\nu(dt),
    $$
  \end{itemize}
  and in particular, for every adherence value $\mu$ of $(\mu_n)_{n\geq1}$,
  $$
  \int_0^\infty\!\log(t)\,\mu(dt)=\int_0^\infty\!\log(t)\,\nu(dt).
  $$
\end{lem}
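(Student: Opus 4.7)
The plan is to recognize conditions (i)-(iii) as precisely the Hardy-Littlewood-Polya majorization relation between the log-sequences. Set $\alpha_{n,i}:=\log a_{n,i}$ and $\beta_{n,i}:=\log b_{n,i}$; then (i) says both sequences are decreasing, (ii) reads $\sum_{i=1}^k\alpha_{n,i}\leq \sum_{i=1}^k\beta_{n,i}$ for every $k$, and (iii) at $k=1$ combined with (ii) at $k=n$ forces the product equality $\prod_{i=1}^n a_{n,i}=\prod_{i=1}^n b_{n,i}$, equivalently $\sum_{i=1}^n\alpha_{n,i}=\sum_{i=1}^n\beta_{n,i}$. (A preliminary observation: (v) implies $\sup_n\int\ABS{\log t}\,\nu_n(dt)<\infty$, which rules out any point mass of $\nu_n$ at $0$ for $n\gg1$, i.e.\ no $b_{n,i}$ vanishes; by (iii) at $k=1$ no $a_{n,i}$ vanishes either, so all logarithms are finite.) Hence $\alpha_n\prec\beta_n$ in the classical majorization order, and the standard Hardy-Littlewood-Polya theorem (via Birkhoff's theorem on doubly stochastic matrices) yields, for every convex $\phi:\mathds{R}\to\mathds{R}$ and $n\gg1$,
\begin{equation*}
\int\!\phi(\log t)\,\mu_n(dt) \leq \int\!\phi(\log t)\,\nu_n(dt).
\end{equation*}

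All three conclusions follow by appropriate choices of $\phi$. For (j), take $\phi(x)=x^+$: the right-hand side $\int\log^+t\,\nu_n(dt)$ is uniformly bounded thanks to (iv)-(v), and the Markov inequality $\mu_n([R,\infty))\leq(\log R)^{-1}\int\log^+t\,\mu_n(dt)$ then yields tightness. For (jj), take $\phi(x)=(\ABS{x}-T)_+$, which is convex; using the pointwise bound $(\ABS{x}-T)_+\geq\tfrac{1}{2}\ABS{x}\mathds{1}_{\ABS{x}\geq 2T}$, the majorization inequality gives
\begin{equation*}
\int_{\ABS{\log t}\geq 2T}\!\!\ABS{\log t}\,\mu_n(dt) \leq 2\int_{\ABS{\log t}\geq T}\!\!\ABS{\log t}\,\nu_n(dt),
\end{equation*}
and the right-hand side vanishes as $T\to\infty$ uniformly in $n$ by (v).

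For (jjj), applying the majorization inequality to both $\phi(x)=x$ and $\phi(x)=-x$ (both convex) gives the exact equality $\int\log t\,\mu_n(dt)=\int\log t\,\nu_n(dt)$. Combined with (iv)-(v), this yields the convergence to $\int\log t\,\nu(dt)$. Finally, for any weak adherence value $\mu$ of $(\mu_n)_{n\geq1}$ (which exists thanks to (j) and the Prohorov theorem), the uniform integrability of $\log(\cdot)$ for $(\mu_n)$ established in (jj) allows us to pass to the limit along a convergent subsequence to conclude $\int\log t\,\mu(dt)=\int\log t\,\nu(dt)$.

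The main obstacle is conceptual rather than computational: one must see that (ii) and (iii) together encode the two-sided majorization $\alpha_n\prec\beta_n$, rather than just a one-sided weak majorization that would control only one tail. Once the HLP framework is set up, each of (j), (jj), (jjj) reduces to applying the inequality to a well-chosen convex test function; the only care needed is the preliminary verification that uniform integrability of $\log$ for $\nu_n$ forbids zero entries, so that the logarithmic formulation is meaningful.
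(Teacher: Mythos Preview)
Your proof is correct and takes a genuinely different route from the paper. The paper argues each of (j), (jj), (jjj) separately and rather by hand: for (jjj) it observes the product equality directly; for (j) it derives the single inequality $\int_1^\infty\log(s)\,\mu_n(ds)\leq\int_1^\infty\log(s)\,\nu_n(ds)$ from (ii); and for (jj) it carries out a somewhat involved case analysis, choosing a cutoff index $k_n\approx\sigma n$ according to whether $\nu((1,\infty))>0$ and using the partial-sum inequalities (ii) and (iii) at that specific $k_n$. Your observation that (i)--(iii) amount exactly to the Hardy--Littlewood--P\'olya majorization $\alpha_n\prec\beta_n$ of the log-sequences lets you replace all of this by a single mechanism: apply $\sum\phi(\alpha_{n,i})\leq\sum\phi(\beta_{n,i})$ with $\phi(x)=x^+$, $\phi(x)=(\ABS{x}-T)_+$, and $\phi(x)=\pm x$. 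This is cleaner and more conceptual; in particular your treatment of (jj) via $\phi(x)=(\ABS{x}-T)_+$ avoids the paper's case split entirely. The paper's bare-hands argument, on the other hand, is self-contained and does not invoke the HLP/Birkhoff machinery. One small wording point: with the paper's definition of uniform integrability (which uses $\varlimsup_{n\to\infty}$), your claim ``$\sup_n\int\ABS{\log t}\,\nu_n(dt)<\infty$'' should really be stated for $n\gg1$; this is harmless since the conclusions are all asymptotic.
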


\begin{proof}
  \textbf{Proof of (jjj)}. From the logarithmic majorizations (ii-iii) we get,
  for $n\gg1$,
  $$
  \prod_{k=1}^na_{n,k}=\prod_{k=1}^nb_{n,k},
  $$
  and (v) gives $b_{n,k}>0$ and $a_{n,k}>0$ for every $1\leq k\leq n$ and
  $n\gg1$. Now, (iv-v) give
  \begin{align*}
  \int_0^\infty\!\!\!\!\log(t)\,\mu_n(dt)
  &=\frac{1}{n}\log\prod_{k=1}^na_{n,k} \\
  &=\frac{1}{n}\log\prod_{k=1}^nb_{n,k} \\
  &=\int_0^\infty\!\!\!\!\log(t)\,\nu_n(dt) 
  \to \int_0^\infty\!\!\!\!\log(t)\,\nu(dt).
  \end{align*}
  \textbf{Proof of (j)}. From (ii) and (v) we get
  $$
  \sup_{1\leq k\leq n}\sum_{i=1}^k\log(a_{n,i})
  \leq 
  \sup_{1\leq k\leq n}\sum_{i=1}^k\log(b_{n,i})
  \quad\text{and}\quad 
  C:=\sup_{n\geq1}\int_0^\infty\!|\log(s)|\,\nu_n(ds)<\infty
  $$
  respectively. Now the tightness of $(\mu_n)_{n\geq1}$ follows from
  \begin{equation}\label{eq:munu}
    \int_1^\infty\!\log(s)\,\mu_n(ds)
    \leq \int_1^\infty\!\log(s)\,\nu_n(ds)\leq C.
  \end{equation}
  \textbf{Proof of (jj).} We start with the uniform integrability in the
  neighborhood of infinity. Let us show that for $n\gg1$, for any
  $\varepsilon>0$ there exists $t\geq1$ such that
  \begin{equation}\label{eq:unifconvmu}
    \int_t^\infty\!\log(s)\,\mu_n(ds) <\varepsilon.
  \end{equation}
  If $\nu((1,\infty))=0$ then (iv) implies
  $$
  \int_1^\infty\!\log(t)\,\nu_n(dt)<\varepsilon
  $$
  for $n\gg1$ and \eqref{eq:unifconvmu} follows then from \eqref{eq:munu}. If
  otherwise $\nu((1,\infty))>0$ then 
  $$
  c:=\int_1^\infty\!\log(t)\,\nu(dt)>0
  $$
  and one can assume that $\varepsilon<c$. Let us show that there exists a
  sequence of integers $(k_n)_{n\geq1}$ such that
  $\lim_{n\to\infty}k_n/n\to\sigma>0$ and for $n\gg1$,
  \begin{equation}\label{eq:defkn}
    \sup_{1\leq k\leq k_n}\frac{1}{n}\sum_{i=1}^{k_n}\log(b_{n,i})<\varepsilon.
  \end{equation}
  For $0 < \varepsilon/2 < c$, let $t$ be the infimum over all $s>1$ such that
  $$
  \int_s^\infty\!\log(u)\,\nu(du) < \frac{1}{2}\varepsilon.
  $$
  There exists $s \geq t$ such that $\nu(\{s\}) =0$, and from (v) we get
  $$
  \lim_{n\to\infty}\nu_n((s,\infty))=\nu((s, \infty)) \geq 0
  \quad\text{and}\quad
  \lim_{n\to\infty}\int_s^\infty\!\!\!\!\log(u)\,\nu_n(du)
  =\int_s^\infty\!\!\!\!\log(u)\,\nu(du) \leq
  \frac{1}{2}\varepsilon.
  $$
  If $\nu((s,\infty))>0$ then \eqref{eq:defkn} holds with $\sigma:=\nu((s,
  \infty))$ and $k_n := \FLOOR{n\nu_n((s, \infty))}$. Otherwise,
  $\nu((s, \infty))=0$, and if $k'_n :=\FLOOR{n\nu_n ((s,\infty))}$
  then $\lim_{n\to\infty}k'_n/n=0$, while for any $ \delta > 0$,
  $$
  \frac{1}{n}\sum_{i=1}^{k'_n+\FLOOR{n\delta}}\log(b_{n,i})
  \leq \frac{\varepsilon}{2}+\delta\log(s).
  $$
  Taking $k_n := k'_n + \FLOOR{n\delta}$ with $\delta$ small enough, we deduce
  that \eqref{eq:defkn} holds. We have thus shown that \eqref{eq:defkn} holds
  in all case. Now, from (ii) and \eqref{eq:defkn} we get for every $1 \leq k
  \leq k_n$,
  $$
  \frac{1}{n} \sum_{i=1}^{k} \log(a_{n,i}) < \varepsilon.
  $$
  In particular, by using (i), we get $\log(a_{n,k_n})\leq\varepsilon n /
  k_n$ and 
  $$
  \int_{e^{ \varepsilon  n /  k_n }}^\infty\!\log(u)\mu_n(du) < \varepsilon.
  $$
  Since $\varliminf_{n\to\infty} k_n / n = \delta >0$, we deduce that
  \eqref{eq:unifconvmu} holds with $t := e^{ \varepsilon \delta }$. Now, by
  following the same reasoning, with (ii) replaced by (iii), we obtain that
  for all $\varepsilon >0$, there exists $0 < t < 1$ such that for $n\gg1$,
  $$
  -\int_0^t\!\log(s)\mu_n(ds) < \varepsilon,
  $$
  which is the counterpart of \eqref{eq:unifconvmu} needed for (jj).
\end{proof}

\begin{rem}[Other fundamental aspects of the logarithmic potential]
  The logarithmic potential is related to the Cauchy-Stieltjes transform of
  $\mu$ via
  $$
  S_\mu(z) %
  :=\int_{\mathds{C}}\!\frac{1}{z'-z}\,\mu(dz') %
  =(\partial_x-i\partial_y)U_\mu(z)
  \quad\text{and thus}\quad
  (\partial_x+i\partial_y)S_\mu=-2\pi\mu
  $$
  in $\mathcal{D}'(\mathds{C})$. The term ``logarithmic potential'' comes from
  the fact that $U_\mu$ is the electrostatic potential of $\mu$ viewed as a
  distribution of charges in $\mathds{C}\equiv\mathds{R}^2$ \cite{MR1485778}.
  The logarithmic energy
  $$
  \mathcal{E}(\mu)
  :=\int_{\mathds{C}}\!U_\mu(z)\,\mu(dz)
  =-\int_{\mathds{C}}\int_{\mathds{C}}\!\log\ABS{z-z'}\,\mu(dz)\mu(dz')
  $$
  is up to a sign the Voiculescu free entropy of $\mu$ in free probability
  theory \cite{MR1887698}. The circular law $\mathcal{U}_\sigma$ minimizes
  $\mu\mapsto\mathcal{E}(\mu)$ under a second moment constraint
  \cite{MR1485778}. In the spirit of \eqref{eq:UESD} and beyond matrices, the
  Brown \cite{MR866489} spectral measure of a nonnormal bounded operator $a$ is
  $\mu_a:=(-4\pi)^{-1}\Delta\int_0^\infty\!\log(t)\,\nu_{a-zI}(dt)$ where
  $\nu_{a-zI}$ is the spectral distribution of the self-adjoint operator
  $(a-zI)(a-zI)^*$. Due to the logarithm, the Brown spectral measure $\mu_a$
  depends discontinuously on the $*$-moments of $a$ \cite{MR1876844,MR1929504}.
  For random matrices, this problem is circumvented in the Girko Hermitization
  by requiring a uniform integrability, which turns out to be a.s.\ satisfied
  for random matrices such as $n^{-1/2}X$ or $\sqrt{n}M$.
\end{rem}  

\section{General spectral estimates}\label{se:spest}

We gather in this section useful lemmas on deterministic matrices. We provide
mainly references for the most classical results, and sometimes proofs for the
less classical ones.

\begin{lem}[Basic inequalities \cite{MR1288752}]\label{le:basic}
If $A$ and $B$ are $n\times n$ complex matrices then
\begin{equation}\label{eq:basic0}
  s_1(AB)\leq s_1(A)s_1(B)
  \quad\text{and}\quad
  s_1(A+B)\leq s_1(A)+s_1(B)
\end{equation}
and
\begin{equation}\label{eq:basic1}
  \max_{1\leq i\leq n}|s_i(A)-s_i(B)|\leq s_1(A-B)
\end{equation}
and
\begin{equation}\label{eq:basic2}
  s_n(AB)\geq s_n(A)s_n(B).
\end{equation}
Moreover, if $A=D$ is diagonal, then for every $1\leq i\leq n$
\begin{equation}\label{eq:basic3}
  s_n(D)s_i(B) \leq s_i(DB) \leq s_1(D)s_i(B).
\end{equation}
\end{lem}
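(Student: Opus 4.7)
The plan is to derive all four inequalities from two basic facts: the operator-norm identity $s_1(C)=\|C\|_{2\to 2}$, and the Courant-Fischer min-max characterization of singular values
\[
s_i(C)=\min_{\substack{V\subset\mathds{C}^n\\ \dim V=n-i+1}}\max_{\substack{x\in V\\ \|x\|_2=1}}\|Cx\|_2.
\]
Everything else reduces to bookkeeping with these two ingredients.

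First I would dispatch \eqref{eq:basic0}: submultiplicativity of the operator norm gives $\|AB\|_{2\to 2}\leq\|A\|_{2\to 2}\|B\|_{2\to 2}$, and the triangle inequality for the operator norm gives $\|A+B\|_{2\to 2}\leq\|A\|_{2\to 2}+\|B\|_{2\to 2}$. Next, for the Lidskii/Mirsky-type bound \eqref{eq:basic1}, the argument is to fix $i$ and write $A=B+(A-B)$, then for any subspace $V$ of dimension $n-i+1$ and unit $x\in V$, $\|Ax\|_2\leq\|Bx\|_2+\|(A-B)x\|_2\leq\|Bx\|_2+s_1(A-B)$; taking max over $x\in V$ and then min over $V$ yields $s_i(A)\leq s_i(B)+s_1(A-B)$, and by symmetry (swap $A$ and $B$, using $s_1(A-B)=s_1(B-A)$) one obtains the absolute value.

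For \eqref{eq:basic2} I would pick any unit vector $x^*\in\mathds{C}^n$ achieving $s_n(AB)=\|ABx^*\|_2$ and use the lower bounds $\|ABx^*\|_2\geq s_n(A)\|Bx^*\|_2$ and $\|Bx^*\|_2\geq s_n(B)\|x^*\|_2=s_n(B)$, which multiply to give the claim (the case $Bx^*=0$ forces $s_n(B)=0$ and the inequality becomes trivial). Finally for \eqref{eq:basic3}, with $D$ diagonal both inequalities $\|DBx\|_2\leq s_1(D)\|Bx\|_2$ and $\|DBx\|_2\geq s_n(D)\|Bx\|_2$ hold pointwise on $x$; inserting these into the min-max formula above for $s_i(DB)$, applied over the same family of $(n-i+1)$-dimensional subspaces, sandwiches $s_i(DB)$ between $s_n(D)s_i(B)$ and $s_1(D)s_i(B)$.

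There is no genuine obstacle here: these are standard matrix-analysis facts and all follow by short arguments from min-max and operator-norm manipulations, which is presumably why the authors simply cite a reference. If anything, the only subtle step is the perturbation bound \eqref{eq:basic1}, where one must remember to take the $\max$ first (over vectors in $V$) before the $\min$ (over subspaces $V$) so that the additive error $s_1(A-B)$ comes out cleanly; everything else is immediate.
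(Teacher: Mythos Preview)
Your proposal is correct and each of the four arguments goes through as written; in fact the paper does not give a proof of this lemma at all but simply cites Horn and Johnson \cite{MR1288752}, so your write-up supplies details the authors deliberately omitted as standard. One minor remark: your argument for \eqref{eq:basic3} nowhere uses that $D$ is diagonal (only that $s_n(D)\|y\|_2\leq\|Dy\|_2\leq s_1(D)\|y\|_2$ for all $y$), so it actually proves the slightly stronger statement with $D$ an arbitrary $n\times n$ matrix.
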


\begin{lem}[Rudelson-Vershynin row bound]\label{le:rvdist}
  Let $A$ be a complex $n\times n$ matrix with rows $R_1,\ldots,R_n$. Define
  the vector space $R_{-i}:=\mathrm{span}\{R_j:j\neq i\}$. We have then
  $$
  n^{-1/2}\min_{1\leq i\leq n}\mathrm{dist}(R_i,R_{-i}) \leq
  s_n(A) \leq \min_{1\leq i\leq n}\mathrm{dist}(R_i,R_{-i}).
  $$
\end{lem}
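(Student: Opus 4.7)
The plan is to establish both inequalities from the variational characterisations of $s_n$ together with the identity $s_n(A)=s_n(A^*)$. The convenient setup is to work with the columns of $A^*$: let $\tilde R_i$ denote the $i$-th column of $A^*$ (i.e.\ the conjugate transpose of the row $R_i$ viewed as a column vector), and $\tilde R_{-i}:=\mathrm{span}\BRA{\tilde R_j:j\neq i}$. Since entrywise conjugation is an $\mathds{R}$-linear isometry of $\mathds{C}^n$, distances are preserved: $\mathrm{dist}(\tilde R_i,\tilde R_{-i})=\mathrm{dist}(R_i,R_{-i})$ for every $i$. Moreover, $A^*v=\sum_{j=1}^n v_j \tilde R_j$ for every $v\in\mathds{C}^n$.

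For the lower bound I would use $s_n(A)=s_n(A^*)=\inf_{\NRM{v}_2=1}\NRM{A^*v}_2$. Given a unit vector $v$, the pigeonhole principle yields an index $i^\star$ with $|v_{i^\star}|\geq n^{-1/2}$. Since $-\sum_{j\neq i^\star}v_j\tilde R_j$ lies in $\tilde R_{-i^\star}$, the expansion of $A^*v$ gives
$$
\NRM{A^*v}_2 \geq \mathrm{dist}\PAR{v_{i^\star}\tilde R_{i^\star},\tilde R_{-i^\star}}
= |v_{i^\star}|\,\mathrm{dist}(\tilde R_{i^\star},\tilde R_{-i^\star})
\geq n^{-1/2}\min_{1\leq i\leq n}\mathrm{dist}(R_i,R_{-i}),
$$
and taking the infimum over $v$ closes this direction.

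For the upper bound I would construct, for each $i$, an explicit witness. If $\mathrm{dist}(R_i,R_{-i})=0$ for some $i$ then $A$ has linearly dependent rows, $s_n(A)=0$, and there is nothing to prove; otherwise let $u_i$ be the unit vector in the Hermitian orthogonal complement of $\tilde R_{-i}$ obtained by normalising the orthogonal projection of $\tilde R_i$ onto that complement. By construction $\langle\tilde R_j,u_i\rangle=0$ for $j\neq i$ and $\langle\tilde R_i,u_i\rangle=\mathrm{dist}(R_i,R_{-i})>0$. A short computation with the standard Hermitian inner product $\langle x,y\rangle=\sum_k x_k\overline{y_k}$ shows that $\overline{(Au_i)_j}=\langle\tilde R_j,u_i\rangle$ for every $j$, so $Au_i=\mathrm{dist}(R_i,R_{-i})\,e_i$, where $e_i$ is the $i$-th canonical basis vector. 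Therefore $s_n(A)\leq\NRM{Au_i}_2=\mathrm{dist}(R_i,R_{-i})$, and minimising over $i$ gives the upper bound.

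The argument is essentially mechanical; the only point requiring care is the bookkeeping of complex conjugates when translating Hermitian orthogonality of the columns of $A^*$ into the vanishing of coordinates of $Au_i$. Everything else reduces to the variational characterisations of $s_n$ and of the distance to a subspace, together with the pigeonhole bound $|v_{i^\star}|\geq n^{-1/2}$.
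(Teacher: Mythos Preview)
Your proof is correct and follows essentially the same approach as the paper: both establish the lower bound via the variational formula for $s_n$ combined with the pigeonhole inequality $|v_{i^\star}|\geq n^{-1/2}$, and both produce an explicit unit witness for the upper bound. The only cosmetic differences are that the paper passes to columns via $s_n(A)=s_n(A^\top)$ whereas you use $A^*$ and track conjugates, and for the upper bound the paper writes the coefficient vector $y$ realising the distance while you take the dual route of building the unit normal $u_i$ and checking $Au_i=\mathrm{dist}(R_i,R_{-i})\,e_i$; these are equivalent.
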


The argument behind lemma \ref{le:rvdist} is buried in \cite{MR2407948}. We
give a proof below for convenience.

\begin{proof}[Proof of lemma \ref{le:rvdist}]
  Since $A,A^\top$ have same singular values, one can consider the columns
  $C_1,\ldots,C_n$ of $A$ instead of the rows. For every column vector
  $x\in\mathds{C}^n$ and $1\leq i\leq n$, the triangle inequality and the
  identity $Ax=x_1C_1+\cdots+x_n C_n$ give
  $$
  \Vert Ax\Vert_2
  \geq \mathrm{dist}(Ax,C_{-i}) 
  =\min_{y\in C_{-i}} \left\Vert Ax-y\right\Vert_2 
  =\min_{y\in C_{-i}} \left\Vert x_iC_i-y\right\Vert_2 
  =\vert x_i\vert\mathrm{dist}(C_i,C_{-i}).
  $$
  If $\left\Vert x\right\Vert_2 =1$ then necessarily $\vert x_i\vert \geq
  n^{-1/2}$ for some $1\leq i\leq n$ and therefore
  $$
  s_n(A) %
  =\min_{\left\Vert x\right\Vert_2 =1}\left\Vert Ax\right\Vert_2 %
  \geq n^{-1/2}\min_{1\leq i\leq n}\mathrm{dist}(C_i,C_{-i}).
  $$
  Conversely, for every $1\leq i\leq n$, there exists a vector $y$ with
  $y_i=1$ such that
  $$
  \mathrm{dist}(C_i,C_{-i}) %
  =\left\Vert y_1C_1+\cdots+y_nC_n\right\Vert_2 %
  =\left\Vert Ay\right\Vert_2 %
  \geq \left\Vert y\right\Vert_2 %
  \min_{\left\Vert x\right\Vert_2=1}\left\Vert Ax\right\Vert_2 %
  \geq s_n(A)
  $$
  where we used the fact that $\Vert y\Vert^2_2 = |y_1|^2+\cdots+|y_n|^2\geq
  |y_i|^2=1$.
\end{proof}

Recall that the singular values $s_1(A),\ldots,s_{n'}(A)$ of a rectangular
$n'\times n$ complex matrix $A$ with $n'\leq n$ are defined by
$s_i(A):=\lambda_i(\sqrt{AA^*})$ for every $1\leq i\leq n'$.

\begin{lem}[Tao-Vu negative second moment {\cite[lem.
    A4]{tao-vu-cirlaw-bis}}]\label{le:tvneg}
  If $A$ is a full rank $n'\times n$ complex matrix ($n'\leq n$) with rows
  $R_1,\ldots,R_{n'}$, and $R_{-i}:=\mathrm{span}\{R_j:j\neq i\}$, then
   $$ 
   \sum_{i=1}^{n'}s_i(A)^{-2}=\sum_{i=1}^{n'}\mathrm{dist}(R_i,R_{-i})^{-2}.
   $$  
\end{lem}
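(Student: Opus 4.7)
The plan is to reduce the identity to the classical formula relating the diagonal of the inverse Gram matrix to the distance of a row from the span of the other rows. Writing $G:=AA^*$, a positive definite $n'\times n'$ matrix (since $A$ has full rank $n'\leq n$), the singular values $s_i(A)$ are the square roots of the eigenvalues of $G$, so
$$
\sum_{i=1}^{n'} s_i(A)^{-2} \;=\; \operatorname{Tr}(G^{-1}) \;=\; \sum_{i=1}^{n'} (G^{-1})_{ii}.
$$
Thus it suffices to show that for every $i$,
$$
(G^{-1})_{ii} \;=\; \mathrm{dist}(R_i,R_{-i})^{-2}.
$$

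To establish the diagonal formula, I would invoke the cofactor identity $(G^{-1})_{ii}=\det(G_{[\hat\imath\hat\imath]})/\det(G)$, where $G_{[\hat\imath\hat\imath]}$ denotes the principal submatrix obtained by removing row $i$ and column $i$. The key observation is that $G=AA^*$ and $G_{[\hat\imath\hat\imath]}=A_{\hat\imath} A_{\hat\imath}^*$, where $A_{\hat\imath}$ is the $(n'-1)\times n$ matrix obtained from $A$ by deleting the $i$-th row. Hence
$$
(G^{-1})_{ii} \;=\; \frac{\det(A_{\hat\imath} A_{\hat\imath}^*)}{\det(AA^*)}.
$$

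Next, I would use the volume interpretation of Gram determinants: $\det(AA^*)$ equals the squared $n'$-dimensional volume $\mathrm{Vol}_{n'}(R_1,\ldots,R_{n'})^2$ of the parallelepiped spanned by the rows, and $\det(A_{\hat\imath} A_{\hat\imath}^*)=\mathrm{Vol}_{n'-1}(R_1,\ldots,\widehat{R_i},\ldots,R_{n'})^2$. The elementary ``base times height'' identity yields
$$
\mathrm{Vol}_{n'}(R_1,\ldots,R_{n'}) \;=\; \mathrm{Vol}_{n'-1}(R_1,\ldots,\widehat{R_i},\ldots,R_{n'})\,\mathrm{dist}(R_i,R_{-i}),
$$
and therefore $(G^{-1})_{ii}=\mathrm{dist}(R_i,R_{-i})^{-2}$, as required. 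Summing over $i$ gives the claimed identity.

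There is no substantive obstacle: the proof is a short chain of classical linear-algebra facts (trace as sum of eigenvalues, Cramer/cofactor formula, Gram determinants as squared volumes, and base-height decomposition of volume). The only detail worth being careful about is the orthogonality convention for $A^*=\bar A^\top$ in the complex case, so that $G_{ij}=\sum_k A_{ik}\overline{A_{jk}}$ is indeed the Hermitian inner product of rows and the volume interpretation of $\det(AA^*)$ carries over verbatim from the real to the complex setting.
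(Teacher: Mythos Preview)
Your proof is correct and complete: the identity $\sum_i s_i(A)^{-2}=\operatorname{Tr}((AA^*)^{-1})$ together with the cofactor formula and the Gram-determinant/volume interpretation yields $(G^{-1})_{ii}=\mathrm{dist}(R_i,R_{-i})^{-2}$, and summing gives the result. The paper itself does not supply a proof of this lemma; it simply cites Tao and Vu \cite[lem.~A4]{tao-vu-cirlaw-bis}, so there is no ``paper's approach'' to compare against --- your argument is the standard one.
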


\begin{lem}[Cauchy interlacing by rows deletion \cite{MR1288752}]
  \label{le:cauchy}
  Let $A$ be an $n\times n$ complex matrix. If $B$ is $n'\times n$, obtained
  from $A$ by deleting $n-n'$ rows, then for every $1\leq i\leq n'$,
  $$
  s_i(A)\geq s_i(B)\geq s_{i+n-n'}(A).
  $$
\end{lem}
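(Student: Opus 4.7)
The plan is to reduce the claim to the classical Cauchy interlacing theorem for the eigenvalues of a Hermitian matrix and one of its principal submatrices, applied to the Gram matrix $AA^*$. Since $s_i(A)^2=\lambda_i(AA^*)$ and $s_i(B)^2=\lambda_i(BB^*)$ with singular values and eigenvalues both arranged in nonincreasing order, it suffices to establish, for every $1\leq i\leq n'$, the squared version
$$\lambda_i(AA^*)\geq\lambda_i(BB^*)\geq\lambda_{i+n-n'}(AA^*),$$
after which taking nonnegative square roots finishes the argument.

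The first step is to realize $B$ as $B=PA$, where $P$ is the $n'\times n$ row-selection matrix whose rows are the standard basis vectors $e_{j_1},\dots,e_{j_{n'}}$ corresponding to the surviving row indices $j_1<\cdots<j_{n'}$ of $A$. Then
$$BB^*=PAA^*P^*,$$
and, because the rows of $P$ are distinct canonical basis vectors, $BB^*$ coincides with the $n'\times n'$ principal submatrix of the Hermitian positive semidefinite matrix $AA^*$ indexed by $\{j_1,\dots,j_{n'}\}$.

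The second step is to invoke the classical Hermitian Cauchy interlacing theorem (see e.g.\ \cite{MR1288752}): for any $n\times n$ Hermitian matrix $H$ and any $n'\times n'$ principal submatrix $H'$, one has $\lambda_i(H)\geq\lambda_i(H')\geq\lambda_{i+n-n'}(H)$ for every $1\leq i\leq n'$. Specialising to $H=AA^*$ and $H'=BB^*$ yields the displayed squared inequality and hence the lemma.

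No serious obstacle is expected, since the statement is a direct corollary of a textbook fact about Hermitian matrices. Should one prefer a self-contained derivation that bypasses the Hermitian version, the Courant-Fischer identity $s_i(A)=\min_{\dim V=n-i+1}\max_{x\in V,\,\|x\|_2=1}\|Ax\|_2$ combined with the pointwise bound $\|Bx\|_2\leq\|Ax\|_2$ (which holds because $\|Ax\|_2^2$ equals $\|Bx\|_2^2$ plus the squared moduli of the inner products of $x$ with the deleted rows) immediately yields $s_i(B)\leq s_i(A)$, while an analogous dimension-count argument in the dual max-min characterization provides the complementary bound $s_i(B)\geq s_{i+n-n'}(A)$.
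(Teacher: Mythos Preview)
Your argument is correct: writing $B=PA$ with $P$ the row-selection matrix, recognizing $BB^*=PAA^*P^*$ as a principal submatrix of the Hermitian matrix $AA^*$, and then applying the classical Hermitian Cauchy interlacing theorem is exactly the standard route to this inequality, and taking square roots is legitimate since $AA^*$ is positive semidefinite.

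As for comparison with the paper: the paper does not actually prove this lemma. It is stated in the appendix on general spectral estimates with only a citation to Horn and Johnson \cite{MR1288752}, in line with the authors' remark that they ``provide mainly references for the most classical results.'' Your reduction to Hermitian interlacing on $AA^*$ is precisely the argument one finds in that reference, so there is no methodological difference to report. Your closing remark about a direct Courant--Fischer proof is also fine for the upper bound $s_i(B)\leq s_i(A)$ via the pointwise inequality $\|Bx\|_2\leq\|Ax\|_2$; the lower bound $s_i(B)\geq s_{i+n-n'}(A)$ is slightly less immediate by that route (the pointwise inequality goes the wrong way), but your primary argument through $AA^*$ already handles both bounds cleanly, so nothing is missing.
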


Lemma \ref{le:cauchy} gives $[s_{n'}(B),s_1(B)]\subset[s_n(A),s_1(A)]$, i.e.\
row deletions produce a compression of the singular values interval. Another
way to express this phenomenon consists in saying that if we add a row to $B$
then the largest singular value increases while the smallest is diminished.
Closely related, the following result on finite rank additive perturbations.
If $A$ is an $n\times n$ complex matrix, let us set $s_i(A):=+\infty$ if $i<1$
and $s_i(A):=0$ if $i>n$.

\begin{lem}[Thompson-Lidskii interlacing for finite
  rank perturbations \cite{MR0407051}]\label{le:thompson}
  For any $n\times n$ complex matrices $A$ and $B$ with
  $\mathrm{rank}(A-B)\leq k$, we have, for any $i\in\{1,\ldots,n\}$,
  \begin{equation}\label{eq:thompson}
  s_{i-k}(A) \geq s_i(B) \geq s_{i+k}(A).
  \end{equation}
\end{lem}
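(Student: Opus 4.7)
The plan is to deduce both halves of \eqref{eq:thompson} from the Courant--Fischer min--max principle for singular values combined with a dimension count on the kernel of the perturbation $E := A-B$. Recall that for any $n\times n$ complex matrix $C$ and any $1\leq j\leq n$, one has the variational formula
$$
s_j(C) = \max_{\substack{V\subseteq \mathds{C}^n\\ \dim V = j}}\ \min_{\substack{x\in V\\ \NRM{x}_2 = 1}} \NRM{Cx}_2,
$$
and the boundary conventions $s_j(C)=+\infty$ for $j<1$ and $s_j(C)=0$ for $j>n$ make the claimed inequalities trivially valid outside the range $k<i\leq n-k$, so one may focus on that range.

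First I would prove the right inequality $s_i(B)\geq s_{i+k}(A)$. Set $N:=\ker E$, so that $\dim N\geq n-k$ by the rank hypothesis. Pick a subspace $W\subseteq\mathds{C}^n$ of dimension $i+k$ achieving the max in the variational formula for $s_{i+k}(A)$, i.e.\ $\min_{x\in W,\,\NRM{x}_2=1}\NRM{Ax}_2 = s_{i+k}(A)$. By the standard dimension inequality,
$$
\dim(W\cap N) \geq \dim W + \dim N - n \geq (i+k)+(n-k)-n = i,
$$
so one can pick a subspace $V\subseteq W\cap N$ of dimension exactly $i$. For any unit vector $x\in V$ one has $Ex=0$, hence $Bx=Ax$, so $\NRM{Bx}_2=\NRM{Ax}_2\geq s_{i+k}(A)$. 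Plugging $V$ into the variational formula for $s_i(B)$ yields $s_i(B)\geq s_{i+k}(A)$, as desired.

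The left inequality $s_{i-k}(A)\geq s_i(B)$ follows immediately by symmetry: since $\mathrm{rank}(B-A)=\mathrm{rank}(A-B)\leq k$, the inequality just proved, applied with the roles of $A$ and $B$ swapped and with index $j:=i-k$, gives $s_{i-k}(A)=s_j(A)\geq s_{j+k}(B)=s_i(B)$. No genuine obstacle arises here; the only step requiring care is the dimension count in the kernel, which is precisely where the rank assumption on $A-B$ is used. This min--max argument is the cleanest route and avoids any appeal to Weyl-type inequalities, which in any case reduce to the same variational principle.
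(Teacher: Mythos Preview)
Your proof is correct. The paper itself does not supply a proof of this lemma; it merely states it with a citation to Thompson's original article, so there is no in-paper argument to compare against. Your route via the Courant--Fischer min--max characterization of singular values combined with the dimension count on $\ker(A-B)$ is a clean, self-contained derivation and is essentially the standard modern proof of this interlacing.

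One small imprecision worth tightening: you assert that both inequalities in \eqref{eq:thompson} are trivially valid outside the range $k<i\leq n-k$, but in fact each is trivial only on \emph{one} side of that range (the left inequality $s_{i-k}(A)\geq s_i(B)$ is vacuous for $i\leq k$, the right inequality $s_i(B)\geq s_{i+k}(A)$ is vacuous for $i>n-k$). This does not affect your argument, since your proof of the right inequality actually works for every $1\leq i\leq n-k$ (the choice of $W$ of dimension $i+k$ is legitimate there regardless of whether $i>k$), and the symmetric argument covers every $k<i\leq n$. So the two halves together do cover all of $\{1,\ldots,n\}$; only the phrasing of the preliminary remark should be adjusted.
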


Even if lemma \ref{le:thompson} gives nothing on the extremal singular values
$s_i(B)$ where $i\leq k$ or $n-i< k$, it provides however the useful ``bulk''
inequality $\NRM{F_A-F_B}_\infty\leq \mathrm{rank}(A-B)/n$ where $F_A$ and
$F_B$ are the cumulative distribution functions of $\nu_A$ and $\nu_B$
respectively.

\begin{lem}[Weyl inequalities \cite{MR0030693}]\label{le:weyl}
  For every $n\times n$ complex matrix $A$, we have 
  \begin{equation}\label{eq:weyl0}
    \prod_{i=1}^k|\lambda_i(A)|\leq \prod_{i=1}^ks_i(A)
    \quad\text{and}\quad
    \prod_{i=k}^ns_i(A) \leq \prod_{i=k}^n|\lambda_i(A)| 
  \end{equation}
  for all $1\leq k\leq n$, with equality for $k=n$. In particular, by
  viewing $\ABS{\det(A)}$ as a volume,
  \begin{equation}\label{eq:weyl1}
    |\det(A)|=\prod_{k=1}^n|\lambda_k(A)|=\prod_{k=1}^ns_k(A)
    =\prod_{k=1}^n\mathrm{dist}(R_k,\mathrm{span}\{R_1,\ldots,R_{k-1}\})
  \end{equation}
  where $R_1,\ldots,R_n$ are the rows of $A$. Moreover, for every increasing
  function $\varphi$ from $(0,\infty)$ to $(0,\infty)$ such that
  $t\mapsto\varphi(e^t)$ is convex on $(0,\infty)$ and
  $\varphi(0):=\lim_{t\to0^+}\varphi(t)=0$, we have
  \begin{equation}\label{eq:weyl2}
    \sum_{i=1}^k\varphi(|\lambda_i(A)|^2) \leq \sum_{i=1}^k\varphi(s_i(A)^2)
  \end{equation}
  for every $1\leq k\leq n$. In particular, with $\varphi(t)=t$ for every
  $t>0$ and $k=n$, we obtain
  \begin{equation}\label{eq:weyl3}
    \sum_{k=1}^n|\lambda_k(A)|^2 \leq \sum_{k=1}^ns_k(A)^2
    =\mathrm{Tr}(AA^*)=\sum_{i,j=1}^n|A_{i,j}|^2.
  \end{equation}
\end{lem}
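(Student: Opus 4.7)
The plan is to derive \eqref{eq:weyl0} first by combining Schur triangularization with compound (exterior power) matrices, then to deduce the equality case at $k=n$, the volume formula \eqref{eq:weyl1} via Gram--Schmidt, the convex majorization \eqref{eq:weyl2} by an application of Hardy--Littlewood--P\'olya, and finally \eqref{eq:weyl3} as a specialization. Ordering $|\lambda_1(A)|\geq\cdots\geq|\lambda_n(A)|$, I begin with the Schur form $A=UTU^*$ with $U$ unitary and $T$ upper triangular, $T_{ii}=\lambda_i(A)$. For the first inequality in \eqref{eq:weyl0} I pass to the $k$-th compound matrix $\wedge^k A$, whose nonzero eigenvalues are the products $\prod_{i\in I}\lambda_i(A)$ over $k$-subsets $I\subset\{1,\ldots,n\}$ and whose singular values are $\prod_{i\in I}s_i(A)$. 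Ordered by modulus, its spectral radius equals $|\lambda_1(A)\cdots\lambda_k(A)|$ while its largest singular value equals $s_1(A)\cdots s_k(A)$, so the universal bound $|\lambda_1(B)|\leq s_1(B)$ applied to $B=\wedge^k A$ delivers the desired inequality. The identity at $k=n$ comes from $|\det A|^2=\det(AA^*)=\prod_i s_i(A)^2$ together with $\det A=\prod_i\lambda_i(A)$; dividing the $k=n$ equality by the first inequality of \eqref{eq:weyl0} applied at index $k-1$ then yields the second inequality (with the singular case handled by perturbing $A$ into $A+\varepsilon I$ and letting $\varepsilon\to 0$).

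For the row-distance expression \eqref{eq:weyl1} I would use the QR decomposition $A=QR$ with $Q$ unitary and $R$ upper triangular: then $|\det A|=|\det R|=\prod_k|R_{kk}|$, and the Gram--Schmidt process applied to the rows of $A$ identifies $|R_{kk}|$ with $\mathrm{dist}(R_k,\mathrm{span}\{R_1,\ldots,R_{k-1}\})$. The inequality \eqref{eq:weyl2} is then a consequence of \eqref{eq:weyl0} via log-majorization: setting $x_i:=2\log s_i(A)$ and $y_i:=2\log|\lambda_i(A)|$, the inequalities \eqref{eq:weyl0} together with equality at $k=n$ read exactly $(y_i)\prec(x_i)$. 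Applying the classical Hardy--Littlewood--P\'olya theorem to the nondecreasing convex function $g(t):=\varphi(e^t)$ gives $\sum_{i=1}^k g(y_i)\leq\sum_{i=1}^k g(x_i)$, which is \eqref{eq:weyl2}.

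Finally \eqref{eq:weyl3} follows by specializing $\varphi(t)=t$ and $k=n$; it also admits a direct one-line proof from the Schur form, since $\mathrm{Tr}(AA^*)=\mathrm{Tr}(TT^*)=\sum_{i,j}|T_{ij}|^2\geq\sum_i|T_{ii}|^2=\sum_i|\lambda_i(A)|^2$, while $\mathrm{Tr}(AA^*)=\sum_{i,j}|A_{ij}|^2$ is immediate from the definition. The main obstacle is the passage from \eqref{eq:weyl0} to \eqref{eq:weyl2}, which requires the nontrivial (but classical) Hardy--Littlewood--P\'olya principle that log-majorization is preserved by any function whose composition with the exponential is nondecreasing and convex; the remaining ingredients reduce to two standard linear-algebraic tools, Schur triangularization and exterior powers, that bridge spectral and metric information about $A$.
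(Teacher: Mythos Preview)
The paper does not supply its own proof of this lemma: it is stated as a classical result with a citation to Weyl's 1949 paper, in keeping with the announcement at the start of the appendix that references are given for the most classical facts. Your proposal is therefore not competing against an in-paper argument but simply filling in the classical proof, and it does so correctly along the standard lines (Schur triangularization plus exterior powers for \eqref{eq:weyl0}, Gram--Schmidt for \eqref{eq:weyl1}, and a convexity/majorization principle for \eqref{eq:weyl2}).

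Two small points of precision are worth tightening. First, to identify $|R_{kk}|$ with the distance of the $k$-th \emph{row} of $A$ to the span of the preceding rows you want the QR factorization of $A^\top$ (equivalently an LQ factorization of $A$), not of $A$ itself; the idea is right but the bookkeeping should match. Second, the passage from \eqref{eq:weyl0} to the partial-sum inequality \eqref{eq:weyl2} uses the weak-majorization form of the convexity principle (nondecreasing convex functions preserve weak log-majorization, sometimes attributed to Tomi\'c or to Weyl himself), rather than the plain Hardy--Littlewood--P\'olya statement for full sums; you invoke the right tool in spirit, but naming it accurately avoids ambiguity. With these cosmetic fixes the argument is complete. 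Incidentally, your perturbation $A\mapsto A+\varepsilon I$ for the second inequality in \eqref{eq:weyl0} is unnecessary: if $A$ is singular then $s_n(A)=0$ and the left-hand side vanishes, so only the invertible case requires the division argument.
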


It is worthwhile to mention that \eqref{eq:thompson} and \eqref{eq:weyl0} are
optimal in the sense that every sequences of numbers satisfying these
inequalities are associated to matrices, see \cite{MR0407051,MR0061573}.

\section{Additional lemmas}\label{se:adle}

Lemma \ref{le:perturb} below is used in the proof of theorem
\ref{th:singular}. We omit its proof since it follows for instance quite
easily from the Paul L\'evy criterion on characteristic functions.

\begin{lem}[Convergence under uniform perturbation]\label{le:perturb}
  Let $(a_{n,k})_{1\leq k\leq n}$ and $(b_{n,k})_{1\leq k\leq n}$ be
  triangular arrays of complex numbers. Let $\mu$ be a probability measure on
  $\mathds{C}$.
  $$
  \text{If}\quad %
  \frac{1}{n}\sum_{k=1}^n%
  \delta_{a_{n,k}}\underset{n\to\infty}{\overset{\mathscr{C}_b}{\longrightarrow}}\mu %
  \quad\text{and}\quad %
  \lim_{n\to\infty}\max_{1\leq k\leq n}|a_{n,k}-b_{n,k}|=0 %
  \quad\text{then}\quad %
  \frac{1}{n}\sum_{k=1}^n%
  \delta_{b_{n,k}}\underset{n\to\infty}{\overset{\mathscr{C}_b}{\longrightarrow}}\mu.
  $$
\end{lem}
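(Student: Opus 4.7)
The plan is to invoke the Paul L\'evy continuity theorem for probability measures on $\mathds{C}\equiv\mathds{R}^2$. Set
$$
\mu_n:=\frac{1}{n}\sum_{k=1}^n\delta_{a_{n,k}}
\quad\text{and}\quad
\nu_n:=\frac{1}{n}\sum_{k=1}^n\delta_{b_{n,k}},
$$
and for $t=(t_1,t_2)\in\mathds{R}^2$ consider the characteristic functions $\widehat{\mu_n}(t)=\int e^{i\langle t,z\rangle}\,\mu_n(dz)$ and $\widehat{\nu_n}(t)$, where $\langle t,z\rangle:=t_1\mathfrak{Re}(z)+t_2\mathfrak{Im}(z)$. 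Since $\mu_n\overset{\mathscr{C}_b}{\longrightarrow}\mu$, L\'evy's theorem gives $\widehat{\mu_n}(t)\to\widehat{\mu}(t)$ pointwise in $t$. The goal is to upgrade this to $\widehat{\nu_n}(t)\to\widehat{\mu}(t)$ and conclude via the converse direction of L\'evy's theorem.

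The only step is an elementary comparison of $\widehat{\mu_n}$ with $\widehat{\nu_n}$. Using the standard inequality $|e^{ix}-e^{iy}|\leq|x-y|$ for real $x,y$, one gets termwise
$$
\ABS{e^{i\langle t,a_{n,k}\rangle}-e^{i\langle t,b_{n,k}\rangle}}
\leq \ABS{\langle t,a_{n,k}-b_{n,k}\rangle}
\leq \NRM{t}_2\,\ABS{a_{n,k}-b_{n,k}},
$$
and averaging over $k$ yields
$$
\ABS{\widehat{\mu_n}(t)-\widehat{\nu_n}(t)}
\leq \NRM{t}_2\max_{1\leq k\leq n}\ABS{a_{n,k}-b_{n,k}}.
$$
By the uniform perturbation hypothesis, the right-hand side tends to $0$ for every fixed $t$. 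Hence $\widehat{\nu_n}(t)\to\widehat{\mu}(t)$ pointwise on $\mathds{R}^2$, and since the limit $\widehat{\mu}$ is continuous at $0$, the L\'evy continuity theorem delivers $\nu_n\overset{\mathscr{C}_b}{\longrightarrow}\mu$, which is the claim.

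There is essentially no genuine obstacle here; the only mild point to be careful about is to work with characteristic functions on $\mathds{R}^2$ rather than with an arbitrary $f\in\mathscr{C}_b(\mathds{C})$ in the direct approach, because a bounded continuous $f$ need not be uniformly continuous and a tightness/truncation argument would be needed to handle it directly. Using characteristic functions sidesteps this entirely, which is why the L\'evy route is clean.
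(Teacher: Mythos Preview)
Your proof is correct and follows exactly the route the paper indicates: the authors omit the proof, noting only that it ``follows for instance quite easily from the Paul L\'evy criterion on characteristic functions,'' which is precisely what you have carried out.
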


Lemma \ref{le:concdist} below is used for the rows of random matrices in the
proof of theorem \ref{th:circular}.

\begin{lem}[Tao-Vu distance lemma {\cite[prop.
    5.1]{tao-vu-cirlaw-bis}}]\label{le:concdist} Let $(X_i)_{i\geq1}$ be
  i.i.d.\ random variables on $\mathds{C}$ with finite positive variance
  $\sigma^2:=\mathds{E}(|X_1-\mathds{E}X_1|^2)$. For $n\gg1$ and every
  deterministic subspace $H$ of $\mathds{C}^n$ with $1 \leq \mathrm{dim}(H)
  \leq n - n ^{0.99}$, setting $R:=(X_1,\ldots,X_n)$,
    $$
    \mathds{P}\PAR{\mathrm{dist}(R,H) \leq \frac{\sigma}{2}\sqrt {n-\mathrm{dim}(H)}} %
    \leq \exp(-n^{0.01}).
    $$
\end{lem}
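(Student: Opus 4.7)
The plan is to rewrite $\mathrm{dist}(R,H)$ as the norm of an orthogonal projection, reduce to the centered i.i.d.\ case, and then combine Talagrand's convex concentration inequality on a truncated part of $R$ with a separate moment bound on its tail. First, setting $\mu := \mathds{E}X_1$, since $\mathds{E}R = \mu\mathbf{1}$ I would replace $H$ by $H' := H + \mathds{C}\mathbf{1}$, which lifts the dimension by at most one (still leaving codimension $\geq n^{0.99}-1$), and then $\mathrm{dist}(R,H) \geq \mathrm{dist}(R,H') = \mathrm{dist}(Y,H')$ with $Y_i := X_i-\mu$ centered. Thus I may assume $\mathds{E}X_1 = 0$. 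Letting $P$ denote the orthogonal projection onto $H^\perp$, of rank $r := n - \dim(H) \geq n^{0.99}$, the target becomes $\mathds{P}(\NRM{PR}_2 \leq \tfrac{\sigma}{2}\sqrt{r}) \leq \exp(-n^{0.01})$, and a direct computation gives $\mathds{E}\NRM{PR}_2^2 = \sigma^2\,\mathrm{tr}(P) = \sigma^2 r$, so the goal is to prevent $\NRM{PR}_2$ from dropping far below its typical size $\sigma\sqrt{r}$.

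Next, since only a second moment is available, I would split coordinate-wise $R_i = \tilde R_i + W_i$ at a truncation level $T$, with $\tilde R_i := R_i\mathds{1}_{|R_i|\leq T} - \mathds{E}[R_i\mathds{1}_{|R_i|\leq T}]$ and $W_i := R_i - \tilde R_i$, both centered. Then $|\tilde R_i|\leq 2T$, and writing $\tilde\sigma^2 := \mathrm{Var}(\tilde R_1)$, I can arrange $\tilde\sigma^2 \geq (1-\eta)\sigma^2$ and $\mathds{E}|W_1|^2 \leq \eta\sigma^2$ for any prescribed $\eta>0$ by taking $T$ large. The map $\tilde R \mapsto \NRM{P\tilde R}_2$ is convex and $1$-Lipschitz on $\mathds{C}^n$ for the Euclidean metric, and the coordinates $\tilde R_i$ are bounded by $2T$, so Talagrand's convex concentration inequality yields $\mathds{P}(\ABS{\NRM{P\tilde R}_2 - \mathbb{M}\NRM{P\tilde R}_2} \geq s) \leq C\exp(-c s^2/T^2)$, with $\mathbb{M}$ the median. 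A second-moment computation pins that median near $\tilde\sigma\sqrt{r}$; choosing $s = \tilde\sigma\sqrt{r}/4$ and letting $T$ be a constant (or grow subpolynomially in $n$), the bound becomes $C\exp(-c r/T^2) \leq \exp(-n^{0.01})$, using $r \geq n^{0.99}$.

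What remains is to show $\NRM{PW}_2 \leq \tfrac{1}{4}\sigma\sqrt{r}$ with probability $\geq 1-\exp(-n^{0.01})$, after which $\NRM{PR}_2 \geq \NRM{P\tilde R}_2 - \NRM{PW}_2 \geq \tfrac{\sigma}{2}\sqrt{r}$ concludes. This is the hard part: with only a second moment on $X_1$, Talagrand does not apply to $W$, and the variables $|W_i|^2$ need not admit any finite exponential moment, so a naive Chernoff bound on $\sum_i |W_i|^2$ fails. The remedy I would try is to let $T = T(n)$ grow slowly and perform a dyadic decomposition of the tail so that each layer is bounded and a Bernstein-type inequality applies, harvesting exponential concentration from the massive independence available; the comfortable gap between the target rate $n^{0.01}$ and the codimension $r \geq n^{0.99}$ provides considerable slack. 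Alternatively, following Tao and Vu in \cite{tao-vu-cirlaw-bis}, one may strengthen the hypothesis to a $(2{+}\varepsilon)$-th moment and apply a Hanson--Wright or Bernstein bound directly to $\NRM{PW}_2$; regardless of the exact route, this step is the technical core of the argument.
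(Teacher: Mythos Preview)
The paper does not actually give a proof of this lemma; it simply cites \cite[prop.~5.1]{tao-vu-cirlaw-bis} and remarks that the argument rests on Talagrand's concentration inequality for convex Lipschitz functions of independent bounded variables. Your overall architecture --- center by enlarging $H$ to contain $\mathbf{1}$, write $\mathrm{dist}(R,H)=\NRM{PR}_2$ for the orthogonal projection $P$ onto $H^\perp$, truncate, and apply Talagrand to the bounded part --- is exactly the Tao--Vu scheme, and your median computation is fine.

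The gap is precisely where you flag it: the tail piece $W$. With only a second moment on $X_1$, neither a dyadic decomposition nor a Bernstein bound will yield $\mathds{P}(\NRM{PW}_2>\tfrac14\sigma\sqrt{r})\leq\exp(-n^{0.01})$, because the summands $|W_i|^2$ have no controlled higher moments and each dyadic shell still carries unbounded variables; and your fallback of assuming $\mathds{E}|X_1|^{2+\varepsilon}<\infty$ would prove a strictly weaker statement than the one claimed. The device that makes the argument work under the bare second-moment hypothesis is to absorb the bad \emph{coordinates} into the subspace rather than to bound their contribution to the vector. Set $J:=\{i:|X_i|>T\}$ with $T=T(n)\to\infty$ slowly (e.g.\ $T\asymp n^{0.05}$, so that $p:=\mathds{P}(|X_1|>T)\leq C/T^2$ is small); since $|J|$ is a sum of i.i.d.\ Bernoulli indicators, Chernoff gives $|J|\leq n^{0.9}$ with probability $\geq 1-\exp(-cn^{0.9})$. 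On that event enlarge $H$ to $H_J:=H+\mathrm{span}(e_j:j\in J)$, which still has codimension $\geq n^{0.99}-n^{0.9}$. Because $(H_J)^\perp\subset\mathrm{span}(e_i:i\notin J)$, the distance $\mathrm{dist}(R,H_J)$ depends only on $(X_i)_{i\notin J}$, which conditionally on $J$ are i.i.d.\ and bounded by $T$; Talagrand now applies cleanly with no residual tail term, and $\mathrm{dist}(R,H)\geq\mathrm{dist}(R,H_J)$ finishes. The point you were missing is that the \emph{count} of large coordinates is exponentially controlled (this needs no moment assumption at all), whereas the \emph{size} of those coordinates is not.
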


The proof of lemma \ref{le:concdist} is based on a concentration inequality
for convex Lipschitz functions and product measures due to Talagrand
\cite{MR1361756}, see also \cite[cor. 4.9]{MR1849347}. The power $0.01$ is
used here to fix ideas and is obviously not optimal. This is more than enough
for our purposes (proof of theorem \ref{th:circular}). A careful reading of
the proof of theorem \ref{th:circular} shows that a polynomial bound on the
probability with a large enough power on $n$ suffices.

We end up this section by a lemma used in the proof of theorem
\ref{th:least-singular}. 

\begin{lem}[A special matrix]\label{le:A}
  For every $w\in\mathds{C}$, let us define
  the $n\times n$ complex matrix 
  $$
  A_w=I-w
  \begin{pmatrix}
    1 & 0 & \cdots & 0\\
    \vdots & \vdots &  & \vdots\\
    1 & 0 & \cdots & 0
  \end{pmatrix}.
  $$
  Then for every $z\in\mathds{C}$ we have
  $s_2(A_{\frac{z}{\sqrt{n}}})=\cdots=s_{n-1}(A_{\frac{z}{\sqrt{n}}})=1$ for
  $n\gg1$ while
  $$
  \lim_{n\to\infty}s_n(A_{\frac{z}{\sqrt{n}}}) %
  = \lim_{n\to\infty}s_1(A_{\frac{z}{\sqrt{n}}})^{-1} %
  = \frac{\sqrt{2}}{\sqrt{2+|z|^2+|z|\sqrt{4+|z|^2}}}
  $$
  and the convergence is uniform on every compact subset of $\mathds{C}$.
\end{lem}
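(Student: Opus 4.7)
The plan is to exploit the fact that the perturbation $I - A_w$ is rank one, so $A_w A_w^* - I$ is rank at most two, which immediately pins down $n-2$ of the singular values and reduces the problem to computing the remaining two from the trace and the determinant.

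Write $A_w = I - w\, u\, e_1^\top$ with $u:=(1,\ldots,1)^\top$ and $e_1$ the first canonical vector. A direct expansion yields
$$
A_w A_w^* = I - w\, u\, e_1^\top - \bar w\, e_1\, u^\top + |w|^2\, u\, u^\top,
$$
so $A_w A_w^* - I$ has range contained in $\mathrm{span}\{u, e_1\}$ and therefore rank at most two. Consequently $1$ is an eigenvalue of $A_w A_w^*$ with multiplicity at least $n-2$, which takes care of the middle singular values once we show that the remaining two eigenvalues $\lambda_{\pm}$ lie on opposite sides of $1$.

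To identify $\lambda_\pm$ I compute two scalar invariants. The trace is
$$
\mathrm{tr}(A_w A_w^*) = n - 2\,\mathrm{Re}(w) + n|w|^2,
$$
so $\lambda_+ + \lambda_- = 2 - 2\,\mathrm{Re}(w) + n|w|^2$. The matrix determinant lemma applied to the rank-one perturbation gives $\det(A_w) = 1 - w\, e_1^\top u = 1-w$, hence $\lambda_+ \lambda_- = |1-w|^2$. From these two identities one finds
$$
(\lambda_+ - 1)(\lambda_- - 1) = \lambda_+\lambda_- - (\lambda_+ + \lambda_-) + 1 = (1-n)|w|^2,
$$
which is strictly negative for $n\geq 2$ and $w\neq 0$. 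Thus $\lambda_+ > 1 > \lambda_-$, which identifies $\lambda_+ = s_1(A_w)^2$ and $\lambda_- = s_n(A_w)^2$, and confirms $s_2(A_w) = \cdots = s_{n-1}(A_w) = 1$.

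I then specialize to $w = z/\sqrt n$, so that the quadratic
$$
t^2 - \bigl(2 - 2\,\mathrm{Re}(z)/\sqrt n + |z|^2\bigr)\, t + |1-z/\sqrt n|^2 = 0
$$
has coefficients converging, uniformly for $z$ in any compact subset of $\mathds{C}$, to those of $t^2 - (2+|z|^2)\, t + 1 = 0$. The roots are continuous functions of the coefficients on compact sets, so $s_1^2$ and $s_n^2$ converge uniformly on compacts to $\frac{2+|z|^2 \pm |z|\sqrt{4+|z|^2}}{2}$. Rationalizing the smaller root, $s_n^2 \to \tfrac{2}{2+|z|^2+|z|\sqrt{4+|z|^2}}$, and by the product relation $s_1^{-2}$ has the same limit, giving the stated formula. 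No step presents a genuine obstacle; the only mild care required is ensuring the sign in $(\lambda_+-1)(\lambda_--1)<0$ so that the correct root is identified with $s_n^2$ rather than a middle singular value.
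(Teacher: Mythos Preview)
Your proof is correct and follows essentially the same route as the paper: observe that $A_wA_w^*-I$ has rank at most two, then pin down the two nontrivial singular values via the trace and determinant identities and solve the resulting quadratic. Your use of the factorization $(\lambda_+-1)(\lambda_--1)=(1-n)|w|^2$ is a small refinement, showing directly that the two extreme singular values bracket $1$ for every $n\geq 2$ and $w\neq 0$, whereas the paper only reads this off from the asymptotic formula for $n\gg1$.
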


\begin{proof}
  Note that $A_0=I$ and $A_wA_{w'}=A_{ww'-(w+w')}$ for every
  $w,w'\in\mathds{C}$. Moreover, $A_w$ is invertible if and only if $w\neq1$
  and in that case $(A_w)^{-1}=A_{w/(w-1)}$. It is a special case of the
  Sherman-Morrison formula for the inverse of rank one perturbations. It is
  immediate to check that $s_1(A_w-I)=\Vert A_w-I\Vert_{2\to2}=\sqrt{n}|w|$
  for every $w\in\mathds{C}$. An elementary explicit computation reveals that
  the symmetric matrix $A_wA_w^*-I$ has rank at most $2$, and thus $A_w$ has
  at least $n-2$ singular values equal to $1$ and in particular $s_n(A_w) \leq
  1 \leq s_1(A_w)$. From now, let us fix $z\in\mathds{C}$ and set
  $w=n^{-1/2}z$ and $A=A_w$ for convenience. The matrix $A$ is nonsingular for
  $n\gg1$ since $w\to0$ as $n\to\infty$. Also, we have $s_n(A)>0$ for $n\gg1$.
  Since $A$ is lower triangular with eigenvalues $1-w,1,\ldots,1$, by
  \eqref{eq:weyl1},
  \begin{equation}\label{eq:uv}
    |1-w| %
    = \prod_{i=1}^n|\lambda_i(A)| %
    = |\det(A)| %
    = \prod_{i=1}^ns_i(A) %
    = u_-u_+
  \end{equation}
  where $u_-\leq u_+$ are two singular values of $A$. We have also
  $$
  u_-^2+u_+^2+(n-2)
  = s_1(A)^2+\cdots+s_n(A)^2 %
  = \mathrm{Tr}(AA^*) %
  = |1-w|^2+(n-1)(1+|w|^2)
  $$
  which gives $u_-^2+u_+^2=1+|1-w|^2+(n-1)|w|^2$. Combined with \eqref{eq:uv},
  we get that $u_\pm^2$ are the solution of $X^2-(1+(n-1)|w|^2+|1-w|^2)X +
  |1-w|^2=0$. This gives
  $$
  2u_\pm^2=2+|z|^2+O(n^{-1/2})\pm|z|\sqrt{4+|z|^2+O(n^{-1/2})}
  $$
  and the $O(n^{-1/2})$ is uniform in $z$ on every compact. From this formula
  we get that $u_-\leq 1$ and $u_+\geq1$ for $n\gg1$, and thus
  $u_-=s_n(A)$ and $u_+=s_1(A)$.
\end{proof}

The result of lemma \ref{le:A} is more than enough for our purposes. More
precisely, a careful reading of the proof of theorem \ref{th:least-singular}
shows that a polynomial (in $n$) lower bound on $s_n(A_{n^{-1/2}z})$ for
$n\gg1$, uniformly on compact sets on $z$, is actually enough.

\section*{Acknowledgments} 

DC is grateful to O. Gu\'edon, M. Krishnapur, and T. Tao for helpful
discussions. CB and DC are grateful to Dipartimento di Matematica Universit\`a
Roma Tre for kind hospitality.

\providecommand{\bysame}{\leavevmode\hbox to3em{\hrulefill}\thinspace}
\providecommand{\MR}{\relax\ifhmode\unskip\space\fi MR }
\providecommand{\MRhref}[2]{%
  \href{http://www.ams.org/mathscinet-getitem?mr=#1}{#2}
}
\providecommand{\href}[2]{#2}

\begin{center}
  \begin{figure}[htbp]
    \includegraphics[scale=.36]{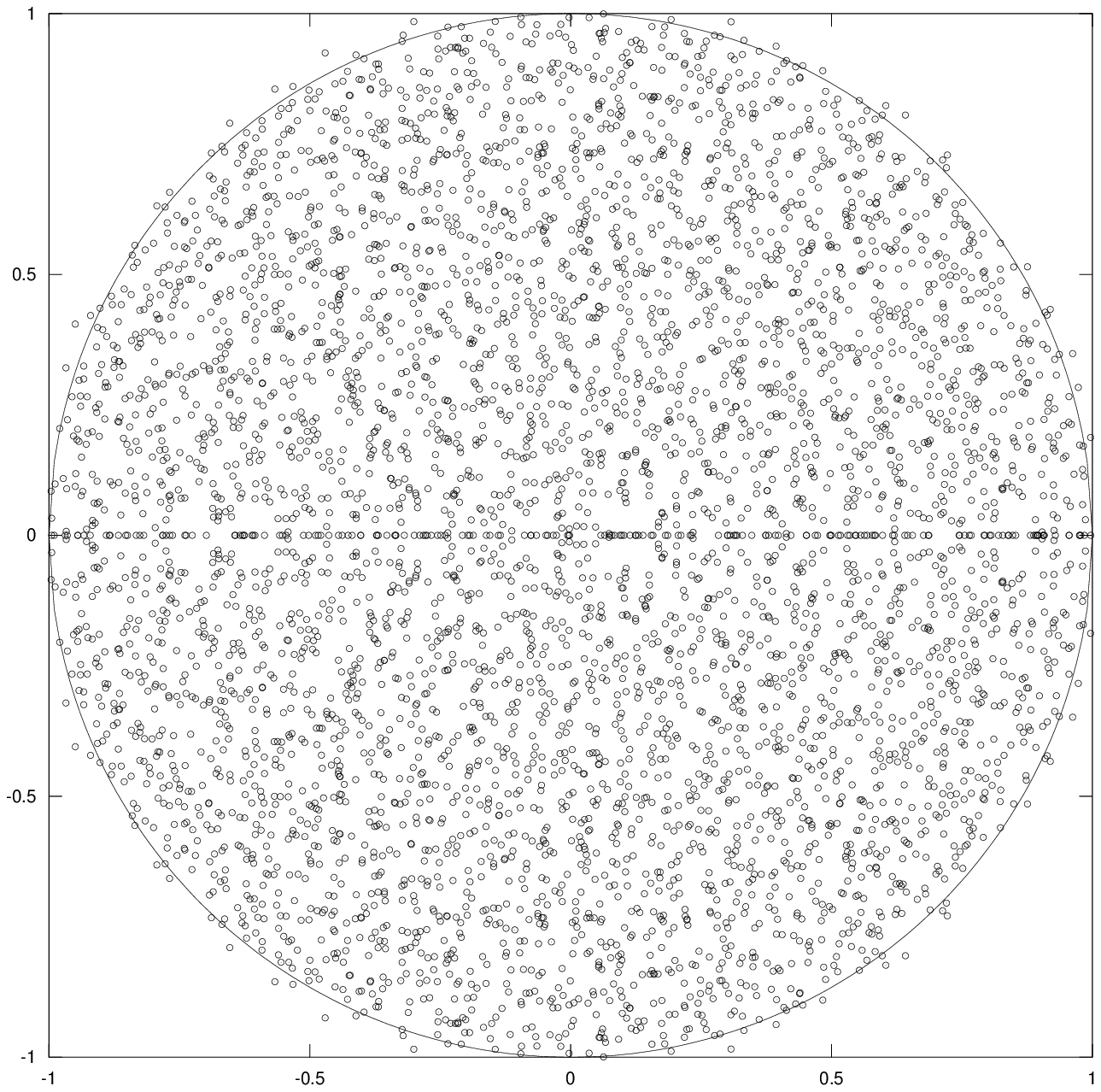}
    \includegraphics[scale=.36]{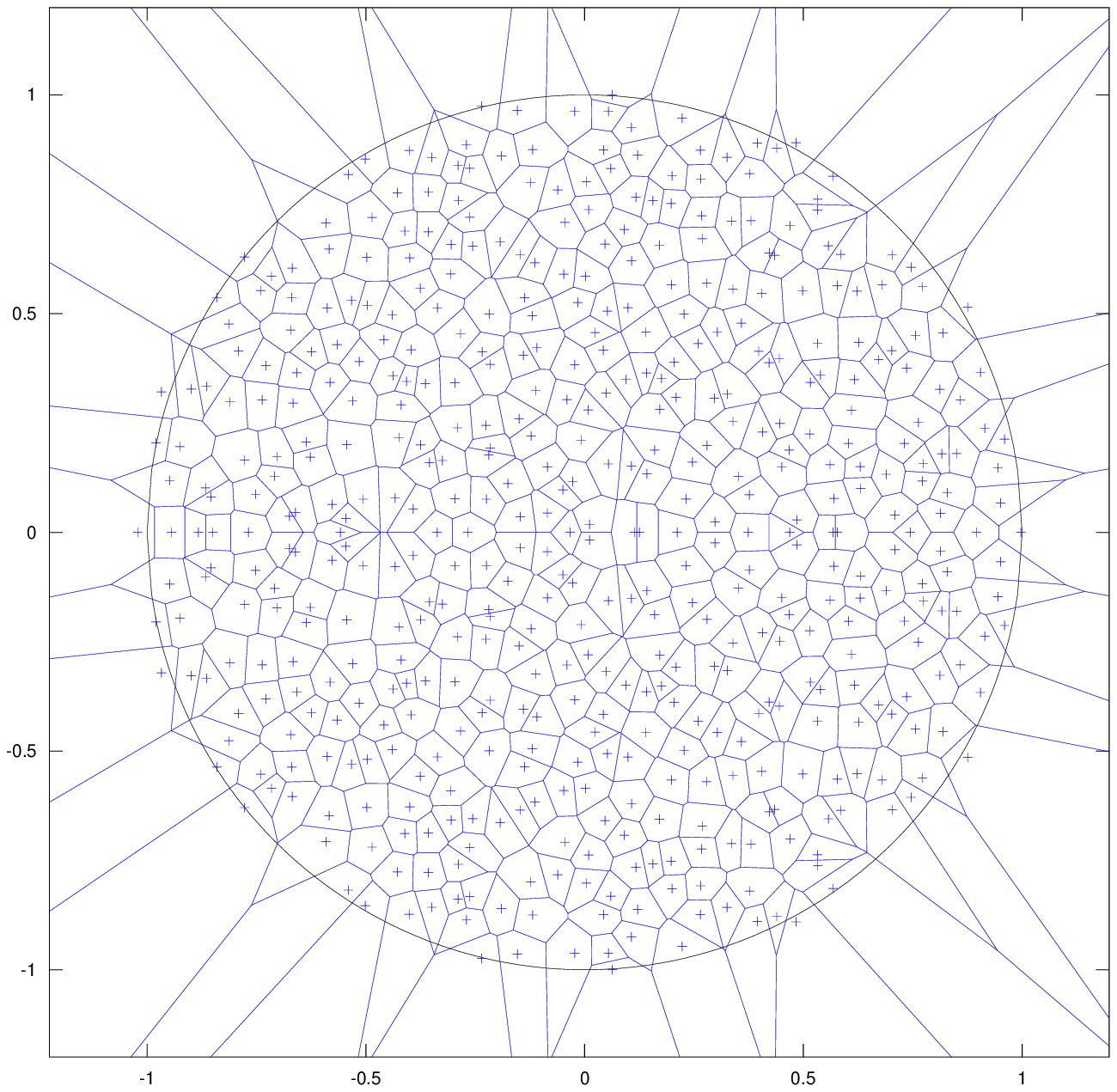}
    \caption{Here we have fixed $n=250$, and $X_{1,1}$ follows the Bernoulli
      law $\frac{1}{2}(\delta_0+\delta_1)$. In both graphics, the solid circle
      has radius $m^{-1}\sigma=1$. The left hand side graphic is the
      superposition of the plot of $\lambda_2,\ldots,\lambda_n$ for $10$
      i.i.d.\ simulations of $\sqrt{n}M$, made with the GNU Octave free
      software. The right hand side graphic is the Vorono{\"{\i}} tessellation
      of $\lambda_2,\ldots,\lambda_n$ for a single simulation of $\sqrt{n}M$.
      Since $\sqrt{n}M$ has real entries, its spectrum is symmetric with
      respect to the real axis. On the left hand side graphic, it seems that
      the spectrum is slightly more concentrated on the real axis. This
      phenomenon, which disappears as $n\to\infty$, was already described for
      random matrices with i.i.d.\ real Gaussian entries by Edelman
      \cite{MR1437734}, see also the work of Akemann and Kanzieper
      \cite{MR2363393}. Our simulations suggest that theorem \ref{th:circular}
      remains valid beyond the bounded density assumption.}
    \label{fi:simus}
  \end{figure} 
\end{center}

\end{document}